\documentclass[12pt]{amsart}
\usepackage{amssymb,verbatim,amscd,amsmath,graphicx,enumerate}
\usepackage{amsmath}
\usepackage{graphicx}
\usepackage{caption}
\usepackage{subcaption}
\usepackage{pdfsync}
\usepackage{fullpage}
\usepackage{color}
\usepackage{marginnote}
\usepackage{tikz}

\usepackage[margin=1 in]{geometry}
\usetikzlibrary{patterns}
\usetikzlibrary{calc}
\usetikzlibrary{matrix}
\usepgflibrary{arrows}
\usetikzlibrary{arrows}
\usetikzlibrary{decorations.pathreplacing}
\usetikzlibrary{decorations.pathmorphing}
\usepackage{hyperref}
\usepackage{cleveref}

\usetikzlibrary{arrows.meta, positioning}

\numberwithin{equation}{section}
\newtheorem{theorem}{Theorem}[section]
\newtheorem{lemma}[theorem]{Lemma}
\newtheorem{proposition}[theorem]{Proposition}
\newtheorem{corollary}[theorem]{Corollary}
\newtheorem{conjecture}[theorem]{Conjecture}

\newtheorem{setting}[theorem]{Setting}
\theoremstyle{definition}
\newtheorem{definition}[theorem]{Definition}

\newtheorem{def-prop}[theorem]{Definition-Proposition}
\newtheorem{remark}[theorem]{Remark}
\newtheorem{example}[theorem]{Example}

\newtheorem*{Mysketch}{Sketch of proof}

\DeclareMathOperator{\Ann}{Ann}
\DeclareMathOperator{\Ass}{Ass}
\DeclareMathOperator{\Min}{Min}

\DeclareMathOperator{\depth}{depth}

\DeclareMathOperator{\supp}{supp}

\def\m{{\mathfrak m}}

\def\NN{\mathbb{N}}

\def\FF{\mathbb{F}}

\def\G{\mathcal{G}}

\newcommand{\bfx}[1]{\ensuremath{{\bf{x^{#1}}}}}
\newcommand{\KK}{\mathbb{K}}

\begin{document}

\title{Regular sequences of linear forms on monomial ideals}

\author{Louiza Fouli}
\address{Department of Mathematical Sciences \\
New Mexico State University\\
P.O. Box 30001 \\
Department 3MB \\
Las Cruces, NM 88003}
\email{lfouli@nmsu.edu}
\urladdr{ https://sites.google.com/view/louiza-fouli/home}

\author{T\`ai Huy H\`a}
\address{Department of Mathematics \\
Tulane University \\
6823 St. Charles Avenue \\
New Orleans, LA 70118}
\email{tha@tulane.edu}
\urladdr{http://www.math.tulane.edu/~tai/}

\author{Susan Morey}
\address{Department of Mathematics \\
Texas State University\\
601 University Drive\\
San Marcos, TX 78666}
\email{morey@txstate.edu}
\urladdr{https://faculty.txst.edu/profile/1922542}

\keywords{regular sequence, depth, projective dimension, monomial ideal, edge ideal, associated primes}
\subjclass[2010]{13C15, 13D05, 05E40, 13F20}

\begin{abstract}
In this paper we establish a means of using the combinatorics associated to a general monomial ideal $I$ in a polynomial ring $R$ to find a regular sequence of linear forms on $R/I$. The sequence of linear forms provides an effective lower bound on $\depth(R/I)$. When $I$ is the edge ideal of a graph, we provide conditions under which this bound is an equality, allowing the realization of the depth via a regular sequence of linear polynomials. In addition, we explicitly describe the minimal primes of $(I, f_1, \ldots, f_q)$, when $f_1, \ldots, f_q$ are homogeneous polynomials of degree one with pairwise disjoint support and $I$ is any monomial ideal. Finally, we propose a conjecture on the form of all associated primes of the ideal $(I, f_1, \ldots, f_q)$, when $I$ is the edge ideal of a graph and $f_i$ are disjoint stars on $I$.
\end{abstract}

\maketitle

\section{Introduction}

The depth of a module is one of the most important invariants in commutative algebra (\cite{AB, Grothendieck, HH}). It is closely related to local cohomology, projective dimension, Castelnuovo--Mumford regularity, and Cohen--Macaulayness (\cite{BrunsHerzog, Eisenbud, HH, Serre}). Although depth admits several homological descriptions, its most concrete interpretation is given by the maximum length of a \emph{regular sequence} (cf. \cite{BrunsHerzog}). Consequently, the construction of explicit regular sequences has long been a central problem in commutative algebra.

For monomial ideals, many algebraic invariants admit combinatorial descriptions. In contrast, explicit regular sequences are often difficult to construct. While depth can frequently be computed indirectly through homological methods, explicit regular sequences provide additional structural information and often reflect underlying combinatorial properties of the ideal. In particular, they yield concrete realizations of depth and frequently lead to information about associated primes and local cohomology.

In \cite{FHM}, the notion of an \emph{initially regular sequence} was introduced as a combinatorial method for producing lower bounds on depth. These sequences are constructed using initial ideals and Gr\"obner basis techniques and provide a flexible mechanism for generating candidates for regular sequences. In many examples arising from graphs, hypergraphs, and monomial ideals, the initially regular sequences constructed in \cite{FHM} were observed to be genuinely regular. This naturally leads to the question of when a collection of initially regular linear forms is in fact a regular sequence.

The purpose of this paper is to investigate this question for a family of linear forms that arise naturally in the theory of initially regular sequences. Let $I$ be a monomial ideal in a polynomial ring $R$. Following \cite{FHM}, a \emph{star} on $I$ is a linear form
\[
f=x_0+x_1+\cdots+x_t
\]
whose variables satisfy a divisibility condition with respect to the minimal monomial generators of $I$. Moreover, if $x_i$ has degree at most one in the generators of $I$ for all $1\le i\le t$, then $f$ is a star \emph{of degree one}. Stars arise naturally in combinatorial constructions of initially regular sequences. For edge ideals, they are closely related to local graph-theoretic configurations such as leaves, whiskers, and dominating structures.

While a single star of degree one is regular modulo a monomial ideal \cite{FHM}, collections of stars need not form regular sequences. The principal new idea of this paper is a combinatorial condition, called \emph{separation}, which controls the interaction among stars and eliminates certain algebraic dependencies that obstruct regularity.
Our first main result shows that separation guarantees regularity.

\medskip

\noindent
\textbf{Theorem~\ref{thm: stars reg seq}.}
\emph{Let $I$ be a monomial ideal in a polynomial ring over an algebraically closed field. If $f_1,\ldots,f_q$ are pairwise separated stars of degree one on $I$, then $f_1,\ldots,f_q$ is a regular sequence on $R/I$.}

\medskip

Theorem~\ref{thm: stars reg seq} provides a broad class of explicit regular sequences arising directly from the combinatorics of a monomial ideal. In particular, it identifies a large family of initially regular sequences that are genuinely regular and therefore produces effective lower bounds for depth.

A second theme of the paper concerns the structure of the quotient
\[
R/(I,f_1,\ldots,f_q).
\]
Although $I$ is monomial, the ideal $(I,f_1,\ldots,f_q)$ is generally not. Consequently, many of the combinatorial techniques available for monomial ideals are no longer directly applicable. This naturally raises the question of how the associated primes of a monomial ideal behave after adjoining suitable linear forms.

As a first step in this direction, we determine the minimal primes of ideals of the form $(I,f_1,\ldots,f_q)$. More precisely, we show that every minimal prime of $(I,f_1,\ldots,f_q)$ is obtained from a minimal prime of $I$ by adjoining the linear forms $f_1,\ldots,f_q$, \Cref{lem: minimal prime format}. For edge ideals we obtain stronger results. Under additional hypotheses, we prove that minimal primes of the original edge ideal give rise to associated primes of the quotient by the stars, \Cref{p:binomial version}, \Cref{thm: minimal primes extend}.

These results suggest a broader phenomenon. Motivated by computational evidence and the results obtained in Section~\ref{sec.AssPrimes}, we formulate the following conjecture.

\medskip

\noindent
\textbf{Conjecture~\ref{conj.ass}.}
\emph{Let $I=I(G)$ be the edge ideal of a graph $G$ in a polynomial ring $R$ over a field. Let $f_1,\ldots,f_q$ be separated stars on $I$. Then \begin{eqnarray*}
    P\in \Ass(R/(I,f_1,\ldots,f_q))  \text{ if and only if } P=(Q,f_1,\ldots,f_q),
\end{eqnarray*}
for some  $Q\in \Ass(R/I)=\Min(R/I)$.}

\medskip

The conjecture predicts that, despite no longer being a monomial ideal, the associated prime structure of $(I,f_1,\ldots,f_q)$ continues to retain a strong memory of the original monomial ideal. The study of associated primes is closely connected to the problem of realizing depth. Indeed, if $f_1,\ldots,f_q$ is a regular sequence on $R/I$, then
\[
\depth(R/I)
=
q+\depth(R/(I,f_1,\ldots,f_q)).
\]
Consequently, the lower bound provided by Theorem~\ref{thm: stars reg seq} is sharp precisely when the quotient by the stars has depth zero. Equivalently, one seeks conditions ensuring that the homogeneous maximal ideal becomes an associated prime after quotienting by the regular sequence.

Our final main result establishes such a criterion for edge ideals.

\medskip

\noindent
\textbf{Theorem~\ref{prop: All variables covered}.}
\emph{Let $I=I(G)$ be the edge ideal of a graph in a ring $R$ with homogeneous maximal ideal $\m$.  Let $f_1,\ldots,f_q$ be disjoint stars whose supports cover all variables. If the product of the centers of the stars is not contained in $(I,f_1,\ldots,f_q)$, then $\m \in \Ass(R/(I,f_1,\ldots,f_q))$ and 
$\depth(R/I)=q$.}

\medskip

The proofs combine combinatorial and homological techniques. A key ingredient is the analysis of colon ideals associated to stars, which allows questions about regularity to be reduced to monomial computations. These colon computations are then combined with short exact sequences and associated-prime techniques to transfer information from monomial ideals to the non-monomial quotients obtained after adjoining stars.

The paper is organized as follows. In Section~\ref{sec.LinearForm}, we introduce stars and the separation condition, prove Theorem~\ref{thm: stars reg seq}, and establish several extension results showing how separated stars can be combined with other regular sequences. In Section~\ref{sec.AssPrimes}, we investigate when sequences of stars realize depth, determine the structure of minimal primes of ideals of the form $(I,f_1,\ldots,f_q)$, establish partial results concerning associated primes, and formulate conjectures describing the associated-prime structure of these quotients.

\medskip

\noindent
\textbf{Acknowledgment.} Part of this work was done when the first two authors visited the third author at Texas State University. The authors thank Texas State University for its hospitality. The second author is partially supported by a Simons Foundation grant (\# 850912).

%%%%%%%%%%%%%%%%%%%%%%%%%%%%%%%%%%

\section{Linear Sums that Form Regular Sequences} \label{sec.LinearForm}

Let $R=\KK[x_1, \ldots, x_n]$ be a polynomial ring over a field $\KK$ and assume $I$ is a monomial ideal in $R$. 
In this section, we identify conditions under which certain homogeneous degree one polynomials will form a regular sequence on $R/I$. In \cite{FHM}, it was shown that certain linear sums, referred to as ``stars'' due to their graphical representation in the square-free case, form an initially regular sequence when they are disjoint, or under other mild hypotheses. In this section, we will show that these sequences are actually regular sequences when the stars satisfy a separability condition.

We begin by setting some standard notation and terminology for monomials and polynomials that will be used throughout the paper.
Given nonnegative integers $a_1, \ldots , a_n$, we write the monomial $x_1^{a_1}\cdots x_n^{a_n}$ in vector form $x_1^{a_1}\cdots x_n^{a_n}={\bf{x}}^{\bf{a}}$ where ${\bf{a}}=(a_1, \ldots, a_n)\in \NN^{n}$, allowing $0\in \NN$. For any polynomial $f$ in $R$, there exists a unique representation $f=\sum_{j=1}^{r}\alpha_{j}{\bf{x}}^{\bf{a_{j}}}$ where for $j\in [r]$, $0\neq \alpha_j\in \KK$ and ${\bf{a}}_j\in \NN^n$ with ${\bf{a}}_j \ne {\bf{a}}_i$ when $i\ne j$. We will refer to the elements ${\bf{x}}^{\bf{a_j}}$ as the monomials of $f$ and the elements $\alpha_{j}{\bf{x}}^{{\bf{a}}_{j}}$ as the terms of $f$. The \emph{support} of $f$, $\supp(f)$, is the set of all ${\bf{x}}^{\bf{a_{j}}}$ with $\alpha_j\neq 0$ in the representation of $f$. Finally, for a positive integer $m$, we will write $[m]$ for the set $\{1, \ldots, m\}$.

Let $I$ be a monomial ideal. The unique set of minimal monomial generators of $I$ will be denoted by $\mathcal{G}(I)$.
Given a variable $x$ of $R$, set
$$d_x(I)= \max\{t \in \mathbb{N} \, : \, x^t \mid M \,\, {\mbox{\rm for some }} \ M \in \G(I)\}.$$ 
Using this notation, we define 
a generalized version of the notion of a star on an arbitrary monomial ideal.

\begin{definition}
Let $I$ be a monomial ideal in a polynomial ring $R$ over a field $\KK$ and let $t\ge 1$. A linear sum of variables $f=x_0+x_1+\ldots +x_t$ is called a {\emph{star}} on $I$ with \emph{center} $x_0$ if whenever $x_0\mid M$ for some $M\in \mathcal{G}(I)$, then there exists $i\in [t]$ such that $x_i\mid M$. Moreover, a star $f$ on $I$ is said to be \emph{of degree one} if $d_{x_i}(I)=1$ for all $i\in [t]$. 
\end{definition}

Note that for a star of degree one, the degree condition does not apply to the variable that is the center of the star.
By convention, when the ideal $I$ is understood, we say $f$ is a star and omit the phrase ``on $I$." Moreover, if $x_0$ is a \emph{free variable}, that is, a variable such that $x_0\nmid M$ for every $M\in \mathcal{G}(I)$, then we consider $x_0$ to be a star, which is by default a star of degree one. 

Generalizing the notion of a graph or hypergraph with disjoint components, we say an ideal $I$ has \emph{disjoint components} if there exist disjoint sets of variables $B_1, B_2, \ldots, B_k$ such that:
\begin{enumerate}[(a)]
\item $R=\KK[B_1, B_2,\ldots, B_k]$;
\item $I=I_1R+I_2R + \cdots + I_kR$, where $I_i\subseteq \KK[B_i]$ for $ i\in[k]$.
 \end{enumerate}
When $I$ is a monomial ideal that has disjoint components, we say that two stars $f_1$ and $f_2$ are in \emph{different components}  
if $f_1\in \KK[B_i]$ and $f_2 \in \KK[ B_j]$ for $i \ne j$. Similarly, we say that two collections of stars, $f_1, \ldots, f_t$ and $g_1, \ldots, g_s$ are in different components if $f_i,g_j$ are in different components for all $i \in [t]$ and $j\in [s]$.

When working with stars, we frequently assume the stars are disjoint. However, for general monomial ideals an additional separability condition, which we now define, will be needed to ensure they form a regular sequence. 

\begin{definition}\label{d:separated}
Let $I$ be a monomial ideal in a polynomial ring $R$ over a field $\KK$. Let $f_1=x_0+\ldots +x_t$ and $f_2=y_0+\ldots +y_s$ be two stars on $I$ with centers $x_0$ and $y_0$, respectively. We say that $f_1$ and $f_2$ are \emph{separated} if
\begin{enumerate}[(i)]
\item $\{x_0, \ldots, x_t\} \cap \{y_0, \ldots, y_s\}=\emptyset$; 
\item For all $i\in[s]$, all $j\in [t]$, and all $M \in \mathcal{G}(I)$, $x_0y_i \nmid M$ and $y_0x_j \nmid M$.
\end{enumerate}
We say that a set of stars is separated if they are pairwise separated.
\end{definition}

When $I$ is the edge ideal of a graph, two stars are separated if and only if they are disjoint, in which case
their centers are distance at least three apart. However, when $I$ is an arbitrary monomial ideal separated is not equivalent to disjoint, as can be seen in the next example.

\begin{example}
  Let  $R=\KK[x_1,\ldots,x_7]$ be a polynomial ring over a field $\KK$ and let $I=(x_1x_3x_4, x_1^2x_3x_7, x_2x_5x_7, x_2x_6)$ be an $R$-ideal.
   Then $f_1=x_1+x_3, f_2=x_2+x_5+x_6$ are both stars of degree one on $I$ and they are separated. Note $g_2=x_2+x_6+x_7$ is also a star of degree one and although $f_1$ and $g_2$ involve disjoint variable sets they are not separated since $x_1x_7$ divides $x_1^2x_3x_7$. Notice that both $f_2$ and $g_2$ have the same center, namely $x_2$. Thus regardless of the definition of distance selected for hypergraphs, distance of centers alone does not determine separation of stars.
\end{example}

Our main tools in proving that a sequence $f_1, \ldots, f_q$ is regular will involve use of the standard short exact sequence
$$0 \rightarrow R/(J:z) \rightarrow R/J \rightarrow R/(J,z) \rightarrow 0$$
for a carefully chosen ideal $J$ and appropriate monomial $z$, and an examination of the associated primes of the terms of this sequence. To that end, the next lemma provides conditions under which the colon operation will commute with the addition of polynomials to a monomial ideal. This will allow us to compute the colon operation in the monomial setting.

\begin{lemma}\label{colon with neighbor}
Let $I\subset R=\KK[x_1, \ldots, x_n]$ be a monomial ideal and let $f_1, \ldots, f_s$ be polynomials in $R$.  
If $\bfx{b}$ is a monomial in $R$ such that $\gcd(\bfx{b}, \bfx{c})=1$ for all $\bfx{c} \in \supp(f_i)$ and $i\in [s]$, then $(I, f_1, \ldots, f_s):{\bfx{b}}=((I:\bfx{b}), f_1, \ldots, f_s).$
\end{lemma}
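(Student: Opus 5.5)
The inclusion $\supseteq$ is immediate. If $g\in (I:\bfx{b})$ then $\bfx{b}g\in I$. Each $f_i$ satisfies $\bfx{b}f_i\in (f_i)$. So every generator of $((I:\bfx{b}),f_1,\ldots,f_s)$ multiplied by $\bfx{b}$ lands in $(I,f_1,\ldots,f_s)$. The work is in the reverse inclusion.

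For $\subseteq$, let $h\in (I,f_1,\ldots,f_s):\bfx{b}$ and write
\[
\bfx{b}h = u + \sum_{i=1}^s g_i f_i,\qquad u\in I,\ g_i\in R.
\]
The plan is to decompose every polynomial according to divisibility by $\bfx{b}$. Since $I$ is monomial and $(I:\bfx{b})$ is monomial, I will work term by term. The key observation is that multiplication by a monomial $\bfx{c}$ coprime to $\bfx{b}$ interacts well with divisibility: for a monomial $m$ and such a $\bfx{c}$, we have $\bfx{b}\mid m\bfx{c}$ if and only if $\bfx{b}\mid m$. This holds because for each variable $x_j$ dividing $\bfx{b}$, the exponent of $x_j$ in $\bfx{c}$ is zero.

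Define the $\KK$-linear map $\pi\colon R\to R$ sending a monomial $m$ to $m/\bfx{b}$ if $\bfx{b}\mid m$, and to $0$ otherwise. It has the following properties.
\begin{enumerate}[(1)]
\item $\pi(\bfx{b}h)=h$.
\item $\pi(u)\in (I:\bfx{b})$ for $u\in I$. Every term $m$ of $u$ lies in $I$, and if $\bfx{b}\mid m$ then $\bfx{b}\cdot(m/\bfx{b})=m\in I$.
\item $\pi(g f_i)=\pi(g)f_i$ for every $g\in R$.
\end{enumerate}
Property (3) is checked on a monomial $g=m$. Here $mf_i=\sum_c \alpha_c m\bfx{c}$ over $\bfx{c}\in\supp(f_i)$, and each $\bfx{c}$ is coprime to $\bfx{b}$. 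If $\bfx{b}\mid m$, each $m\bfx{c}$ is divisible by $\bfx{b}$ and $\pi(mf_i)=(m/\bfx{b})f_i$. If $\bfx{b}\nmid m$, then no $m\bfx{c}$ is divisible by $\bfx{b}$ by the observation above, so $\pi(mf_i)=0$.

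Applying $\pi$ to the displayed equation gives
\[
h=\pi(u)+\sum_i \pi(g_i)f_i\in ((I:\bfx{b}),f_1,\ldots,f_s),
\]
which is the desired inclusion.

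The main obstacle is property (3), specifically the claim that $\bfx{b}\nmid m$ forces $\bfx{b}\nmid m\bfx{c}$. This uses the coprimality hypothesis exactly: some variable $x_j$ appears in $\bfx{b}$ to a higher power than in $m$, and $x_j\nmid \bfx{c}$, so it still appears to a higher power than in $m\bfx{c}$. Everything else is bookkeeping with the linearity of $\pi$, and no assumption on the field is needed.
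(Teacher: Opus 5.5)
Your proof is correct and is essentially the paper's argument in a cleaner form. The paper subtracts from $g$ the contributions of the $\bfx{b}$-divisible monomials of the coefficients $q_i$ and then compares monomials, using the same coprimality fact that $\bfx{b}\nmid m$ forces $\bfx{b}\nmid m\bfx{c}$; your $\KK$-linear projection $\pi$ packages those two steps into the identities $\pi(gf_i)=\pi(g)f_i$ and $\pi(u)\in(I:\bfx{b})$.
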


\begin{proof}
It is clear that $((I:\bfx{b}), f_1, \ldots, f_s) \subseteq (I, f_1, \ldots, f_s):\bfx{b}$. 

To prove the reverse inclusion, let $g\in (I, f_1, \ldots, f_s):\bfx{b}$. Write $f_i=\sum \limits_{j=1}^{r_i}\alpha_{ij}{\bf x^{a_{ij}}}$
and $g=\sum_{i=1}^{\ell}\gamma_i{\bfx{c_i}}$, where $\supp(f_i)=\{ {\bf x^{a_{ij}}}\}_{j=1}^{r_i}$ and $\supp(g) = \{ \bfx{c_i}\}_{i=1}^\ell$.  Then for some $p_i,q_i \in R$, we have
\begin{equation}\label{1}
g\bfx{b}=\sum \limits_{i=1}^{\ell}\gamma_i{\bfx{c_i}}\bfx{b}=\sum \limits_{i=1}^{p}p_i\bfx{u_i}+q_1f_1+\ldots +q_{s}f_s
\end{equation}
where $\mathcal{G}(I)=\{\bfx{u_1}, \ldots, \bfx{u_p}\}$.

For each $i\in [s]$ write $q_{i}=\sum_{j=1}^{s_i}\beta_{ij}\bfx{d_{ij}}$, where $\beta_{ij}\in \KK$ and $\supp(q_i) = \{\bfx{d_{ij}}\}_{j=1}^{s_i}$. Let 
$$\mathcal{A}_i=\{j \mid \bfx{d_{ij}}=\bfx{b}\bfx{d_{ij}'} \text{ for some monomial } \bfx{d_{ij}'} \}$$ and notice that  
$$(g-\sum \limits_{i=1}^s (\sum\limits_{j \in \mathcal{A}_i}\beta_{ij}\bfx{d_{ij}'}f_i))\bfx{b} \in (I,f_1, \ldots, f_s).$$ Therefore, it suffices to show that $g-\sum \limits_{i=1}^s (\sum\limits_{j \in \mathcal{A}_i}\beta_{ij}\bfx{d_{ij}'}f_i))\in (I:\bfx{b})$. Thus by abuse of notation, we may replace $g$ by $g-\sum \limits_{i=1}^s (\sum\limits_{j \in \mathcal{A}_i}\beta_{ij}\bfx{d_{ij}'}f_i))$ and thereby assume that $\mathcal{A}_i = \emptyset$ for all $i \in [s]$. That is, in \eqref{1}, we may assume that $\bfx{b}$ does not divide $\bfx{d_{ij}}$ for any $i,j$. To complete the argument, we must show that under these conditions, $g\in (I:\bfx{b})$. 
Since $(I:\bfx{b})$ is a monomial ideal, it suffices to show that $\bfx{c_i}\in (I:\bfx{b})$ for all $i\in [\ell]$. 

Let $i\in [\ell]$. By \eqref{1} $\bfx{c_i}\bfx{b}$ must be a monomial of either $p_j\bfx{u_j}$ or $q_jf_j$
for some $j$. If $\bfx{c_i}\bfx{b}$ is a monomial of $p_j\bfx{u_j}$, then $\bfx{u_j}$ divides $\bfx{c_i}\bfx{b}$ and thus $\bfx{c_i}\in (I:\bfx{b})$. If on the other hand, if $\bfx{c_i}\bfx{b}$ is a monomial of $q_jf_j$, then   $\bfx{c_i}\bfx{b}=\bfx{d_{jt}}\bfx{a_{jw}}$ for some $t,j,w$. Since $\bfx{b}\nmid \bfx{d_{jt}}$, then $\gcd(\bfx{b}, \bfx{a_{jw}})\neq 1$, contradicting the hypotheses.
\end{proof}

We remark here that for a monomial ideal $I$, any star of degree one is regular on $R/I$ by \cite[Theorem~3.11]{FHM}. When forming regular sequences, the first step is to examine stars in different components. To do so, we apply a theorem that assumes the base field $\KK$ is algebraically closed. Thus for the remainder of the section we assume that $\KK$ is algebraically closed. In general, this is a reasonably mild assumption in our setting. If $\KK$ is not algebraically closed, then since passing to the algebraic closure is a faithfully flat extension, the map $R=\KK[x_1, \ldots, x_n] \rightarrow S=\overline{\KK}[x_1, \ldots, x_n]$ is also faithfully flat. Then a sequence of homogeneous polynomials $f_1, \ldots, f_q$ is regular over $R/I$ if and only if the sequence of images under the extension, $\overline{f_1}, \ldots, \overline{f_q}$, is regular over $R/I \otimes_R S$ (see for example, \cite[Tag 00LM]{stacks-project}).

In order to work with stars in different components of a monomial ideal $I$, we first need a basic extension property for regular elements.

\begin{lemma} \label{l:comp}
    Let $I=I_1R+I_2R$ be an ideal in a polynomial ring $R=\KK[B_1,B_2]$ over an algebraically closed field $\KK$, where $B_1 \cap B_2 = \emptyset$ and $I_i \subset \KK[B_i]$ for $i=1,2$. If $f\in \KK[B_1]$ is regular on $\KK[B_1]/I_1$, then $f$ is regular on $R/I$. 
\end{lemma}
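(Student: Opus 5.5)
The plan is to use the tensor product decomposition
\[
R/I \cong \KK[B_1]/I_1 \otimes_{\KK} \KK[B_2]/I_2 .
\]
Set $A_1=\KK[B_1]/I_1$ and $A_2=\KK[B_2]/I_2$. Since $I=I_1R+I_2R$ and $R=\KK[B_1]\otimes_\KK \KK[B_2]$, right exactness of tensor products gives $R/I \cong A_1\otimes_\KK A_2$. Under this isomorphism, multiplication by $f\in\KK[B_1]$ on $R/I$ corresponds to $\mu_f\otimes \mathrm{id}_{A_2}$, where $\mu_f$ denotes multiplication by $f$ on $A_1$.

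The hypothesis that $f$ is regular on $A_1$ says $\mu_f$ is injective. Since $\KK$ is a field, $A_2$ is a free (hence flat) $\KK$-module, so tensoring the exact sequence $0\to A_1\xrightarrow{\mu_f} A_1$ with $A_2$ over $\KK$ keeps it exact. Therefore multiplication by $f$ on $R/I$ is injective.

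It remains to check that $R/I\neq fR/I$, i.e.\ $(R/I)/f(R/I)\neq 0$. This quotient is $A_1/fA_1\otimes_\KK A_2$. It is nonzero because $A_1/fA_1\neq 0$ (as $f$ is regular on $A_1$) and $A_2\neq 0$ (implicitly $I_2$ is proper, otherwise $R/I=0$ and there is nothing to prove). A tensor product over a field of two nonzero vector spaces is nonzero. If one prefers to argue in the graded setting, where $f$ is homogeneous of positive degree and $I$ is monomial, the same conclusion follows from the observation that $f$ lies in the homogeneous maximal ideal.

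The main obstacle is only bookkeeping: identifying $R/I$ with the tensor product and the action of $f$ with $\mu_f\otimes\mathrm{id}$. Algebraic closedness of $\KK$ is not actually needed in this argument; flatness over a field suffices. Alternatively, one could argue via associated primes: $\Ass(R/I)$ consists of primes minimal over $P_1R+P_2R$ with $P_i\in\Ass(A_i)$, which is where algebraic closure would enter, and $f\notin P_1R+P_2R$ since $(P_1R+P_2R)\cap\KK[B_1]=P_1$. I would present the flatness argument, since it avoids this.
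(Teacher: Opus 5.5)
Your proof is correct, and it takes a genuinely different route from the paper's.

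The paper works with associated primes. It cites \cite[Corollary~2.8]{HNTT} to write every $P\in\Ass(R/I)$ as $P_1R+P_2R$ with $P_i\in\Ass(\KK[B_i]/I_i)$; algebraic closedness is used there so that $P_1R+P_2R$ is itself prime. It then notes that $f\in P$ would give $f\in(P_1R+P_2R)\cap\KK[B_1]=P_1$, contradicting regularity on $\KK[B_1]/I_1$. This is essentially the alternative you sketch at the end.

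You instead identify $R/I$ with $A_1\otimes_\KK A_2$ and multiplication by $f$ with $\mu_f\otimes\mathrm{id}_{A_2}$, then use flatness over a field. This buys several things:
\begin{itemize}
\item The argument is self-contained, with no external structure theorem for associated primes of sums of ideals.
\item It is valid over any field. The paper imposes algebraic closedness in Section~\ref{sec.LinearForm} precisely to apply \cite{HNTT} here, so your argument makes that hypothesis, and the faithfully flat base-change remark, unnecessary for this step.
\item It checks $(R/I)/f(R/I)\neq 0$. The paper's argument leaves this implicit, since it only shows that $f$ avoids every associated prime, i.e.\ is a nonzerodivisor.
\end{itemize}
The paper's version, in turn, fits the associated-prime framework it uses throughout. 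For example, the proof of \Cref{thm: stars reg seq} establishes regularity by showing $f_{s+1}\notin P$ for every $P\in\Ass(R/J)$.

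One small correction concerns the case $I_2=\KK[B_2]$, where $R/I=0$. You say there is then nothing to prove. But under the convention that a regular element must satisfy $M/fM\neq 0$, the conclusion actually fails for $M=0$ rather than holding vacuously. So the lemma should be read with $I$ proper, or with the nonzerodivisor convention that the paper's proof implicitly uses, under which this case is vacuous.
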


\begin{proof}
Since $\KK$ is algebraically closed, for any $P\in \Ass(R/I)$ we can write $P=P_1R+P_2R$, with $P_i\in \Ass(\KK[B_i]/I_i)$ for $i=1,2$, by \cite[Corollary~2.8]{HNTT}. 
 Assume $f \in P_1R + P_2R$. Then $f \in \KK[B_1] \cap (P_1R + P_2R) = P_1$, since $P_2 \subseteq \KK[B_2]$. Furthermore, $f$ is regular on $\KK[B_1]/I_1$, so $f \not\in P_1$, a contradiction. 
\end{proof}

Using \Cref{l:comp}, the next two results show that over an algebraically closed field, a set of stars of degree one coming from different components of an ideal $I$ will form a regular sequence.

\begin{corollary}\label{c:different components}
Let $I$ be a monomial ideal in a polynomial ring $R$ over an algebraically closed field $\KK$. If $f_1, \ldots, f_{q-1}$ is a regular sequence on $I$  and $f_q$ is a star of degree one in a different component from $f_1, \ldots, f_{q-1}$, then $f_1, \ldots, f_q$ is a regular sequence on $R/I$.
\end{corollary}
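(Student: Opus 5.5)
The plan is to reduce to \Cref{l:comp}, applied not to $I$ itself but to the ideal obtained after modding out the earlier elements. By hypothesis $I$ has disjoint components $B_1,\ldots,B_k$ with $I=\sum I_iR$. Since $f_q$ lies in a different component from $f_1,\ldots,f_{q-1}$, I will regroup these components into two blocks of variables. Let $C_2$ be the variable set $B_j$ of the component containing $f_q$, and let $C_1$ be the union of all the other $B_i$. Then $R=\KK[C_1,C_2]$ and $I=J_1R+J_2R$ with $J_1=\sum_{i\ne j}I_i\subseteq \KK[C_1]$ and $J_2=I_j\subseteq \KK[C_2]$.

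I will assume, as the statement implicitly does, that $f_1,\ldots,f_{q-1}$ lie in $\KK[C_1]$. Here I read ``regular sequence on $I$'' as regular on $R/I$. Since the $f_i$ lie in $\KK[C_1]$, the ideal $(I,f_1,\ldots,f_{q-1})$ also splits as $J_1'R+J_2R$, where $J_1'=(J_1,f_1,\ldots,f_{q-1})\subseteq\KK[C_1]$. Regularity of $f_1,\ldots,f_{q-1}$ on $R/I$ is already given, so it remains only to show that $f_q$ is regular on $R/(J_1'R+J_2R)$.

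For that step:
\begin{enumerate}[(1)]
\item $f_q\in\KK[C_2]$ is a star of degree one on the monomial ideal $J_2$. The star condition and the degree-one condition only involve generators of $I$ divisible by variables in $C_2$, and those are exactly the generators of $J_2$.
\item By \cite[Theorem~3.11]{FHM}, $f_q$ is therefore regular on $\KK[C_2]/J_2$.
\item Apply \Cref{l:comp} with the roles of the two blocks interchanged: $f_q\in\KK[C_2]$ is regular on $\KK[C_2]/J_2$, and the ideal is $J_1'R+J_2R$.
\end{enumerate}

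The main obstacle is that \Cref{l:comp} was proved using \cite[Corollary~2.8]{HNTT}, which describes associated primes of a sum of ideals in disjoint variables, and here $J_1'$ is no longer monomial. I will check that this corollary applies to arbitrary ideals in disjoint sets of variables over an algebraically closed field. I expect it does, since the statement of \Cref{l:comp} allows general $I_1,I_2$.

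If it does not, the fallback is a flatness argument. The ring $R/(J_1'R+J_2R)\cong \KK[C_1]/J_1'\otimes_\KK \KK[C_2]/J_2$, and multiplication by $f_q$ is the tensor product of the identity on the first factor with an injective map on the second. That map stays injective because tensoring over a field is exact. This fallback would even remove the algebraic-closure hypothesis.
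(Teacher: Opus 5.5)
Your proposal is correct and essentially matches the paper's proof: split the variables into the block containing $f_q$ and the rest, note that $(I,f_1,\ldots,f_{q-1})=J_1'R+J_2R$, get regularity of $f_q$ on $\KK[C_2]/J_2$ from \cite[Theorem~3.11]{FHM}, and conclude with \Cref{l:comp}. Your worry about monomiality is unfounded: \Cref{l:comp} is stated for arbitrary ideals $I_1,I_2$, which is how the paper applies it to the non-monomial $J_1$, and your tensor-product fallback is also valid and shows the algebraic-closure hypothesis is not needed for this step.
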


\begin{proof}
Since $f_q$ and $f_1, \ldots , f_{q-1}$ are in different components, there exist disjoint variable sets $B_1, B_2$ with $f_1, \ldots, f_{q-1} \in \KK[B_1]$, $f_q \in \KK[B_2]$, and $I=I_1R + I_2R$ where $I_j \subset \KK[B_j]$ for $j=1,2$. Suppose that $f_1, \ldots, f_{q-1}$ is regular on $R/I$. Consider $J=(I,f_1, \ldots, f_{q-1})$. By definition, $J=J_1R+I_2R$ where $J_1=(I_1,f_1, \ldots, f_{q-1})$. Note that $f_q$ is regular on $\KK[B_2]/I_2$ by \cite[Theorem~3.11]{FHM}. By \Cref{l:comp}, $f_q$ is then regular on $R/J$ and the assertion follows. 
\end{proof}

\begin{corollary}\label{c:stars in different components}
   Let $I$ be a monomial ideal in a polynomial ring over an algebraically closed field. If $f_1, \ldots, f_q$ are stars of degree one in different components of a monomial ideal $I$, then $f_1, \ldots, f_q$ is a regular sequence on $R/I$. 
\end{corollary}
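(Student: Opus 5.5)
We argue by induction on $q$, using \Cref{c:different components} as the inductive step. For $q=1$, $f_1$ is a star of degree one on $I$, hence regular on $R/I$ by \cite[Theorem~3.11]{FHM}.

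Assume $q\ge 2$ and that the statement holds for $q-1$ stars. First we make precise what ``in different components'' means for the collection. There are pairwise disjoint variable sets $B_1,\ldots,B_k$ with $R=\KK[B_1,\ldots,B_k]$ and $I=I_1R+\cdots+I_kR$, where $I_j\subseteq \KK[B_j]$, such that each $f_i$ lies in some $\KK[B_{j(i)}]$ and $j(i)\neq j(i')$ for $i\neq i'$.

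Next we apply the induction hypothesis to $f_1,\ldots,f_{q-1}$. These are still stars of degree one on $I$ lying in different components of the same decomposition, so $f_1,\ldots,f_{q-1}$ is a regular sequence on $R/I$.

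To apply \Cref{c:different components}, we coarsen the decomposition into two blocks:
$$B'=\bigcup_{j\neq j(q)}B_j,\qquad B''=B_{j(q)}.$$
Then $R=\KK[B',B'']$ and $I=I'R+I''R$, where $I'=\sum_{j\neq j(q)}I_j\subseteq\KK[B']$ and $I''=I_{j(q)}$. Moreover $f_1,\ldots,f_{q-1}\in\KK[B']$ and $f_q\in\KK[B'']$, so $f_q$ is in a different component from $f_1,\ldots,f_{q-1}$ with respect to this two-block decomposition. \Cref{c:different components} now gives that $f_1,\ldots,f_q$ is a regular sequence on $R/I$.

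The only point needing care is this regrouping step. The notion of ``different components'' is stated relative to a chosen decomposition, and \Cref{c:different components} implicitly uses a two-block decomposition; merging blocks preserves conditions (a) and (b) of the definition, so the regrouping is legitimate.
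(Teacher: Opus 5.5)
Your proof is correct and is essentially the paper's argument: induction on $q$, with the base case from \cite[Theorem~3.11]{FHM} and the inductive step from \Cref{c:different components}. The paper leaves implicit the step of merging the blocks into the two-block decomposition $B'$, $B''$; you make it explicit, and you justify it correctly.
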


\begin{proof} The proof follows by induction, making use of \cite[Theorem~3.11]{FHM} and Corollary \ref{c:different components}.
\end{proof}

We can now generalize \Cref{c:stars in different components} to arbitrary sequences of stars of degree one not necessarily in different components of $I$. We start with a simple observation that will be used regularly in the proofs without citation.

\begin{remark} \label{r:assExt}   
Let $R'$ be a polynomial ring and $R=R'[z]$ where $z$ is an indeterminate. If $A$ is an ideal of $R'$ and $P$ is an associated prime of $R/AR$, then $P$ has the form $P'R$, where $P'$ is an associated prime of $A$ in $R'$. To see this, first note that if $P'$ is a prime ideal in $R'$, then $P'R$ is a prime ideal in $R$ by the prime extension property.
By \cite[Theorem 12]{MatOld}, the set of associated primes of $R/AR = R'/A \otimes_{R'} R$ is the union of $\Ass(R/P'R)$, where $P'$ is an associated prime of $R'/A$. However, as mentioned above, if $P'$ is in $\Ass(R'/A)$, then $P'R$ is a prime ideal in $R$, so $\Ass(R/P'R) = \{P'R\}$ consists of just one ideal $P'R$.
\end{remark}

\begin{theorem} \label{thm: stars reg seq}
Let $I$ be a monomial ideal in a polynomial ring $R$ over an algebraically closed field. Let $f_1, \ldots, f_q$ be stars of degree one on $I$ that are pairwise separated. Then $f_1, \ldots, f_q$ is a regular sequence on $R/I$. 
\end{theorem}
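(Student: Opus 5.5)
Our approach is induction on $q$. For $q=1$ the statement is \cite[Theorem~3.11]{FHM}. For the inductive step we assume $f_1,\ldots,f_{q-1}$ is regular on $R/I$ and set $J=(I,f_1,\ldots,f_{q-1})$. Write $f_q=x_0+x_1+\cdots+x_t$ with center $x_0$. We must show $f_q$ avoids every $P\in\Ass(R/J)$. The main device is the short exact sequence
$$0\to R/(J:x_0)\to R/J\to R/(J,x_0)\to 0,$$
which gives $\Ass(R/J)\subseteq \Ass(R/(J:x_0))\cup\Ass(R/(J,x_0))$. It therefore suffices to show $f_q$ is regular modulo both outer ideals, and we reduce each to an instance with fewer or simpler data.

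\textbf{Colon term.} Separation condition (ii) says $x_0y_i\nmid M$ for every non-center variable $y_i$ of another star $f_k$ and every $M\in\mathcal G(I)$. Condition (i) gives $x_0\notin\supp(f_k)$. So $\gcd(x_0,\bfx{c})=1$ for every monomial $\bfx{c}$ of $f_1,\ldots,f_{q-1}$. By \Cref{colon with neighbor},
$$J:x_0=((I:x_0),f_1,\ldots,f_{q-1}).$$
Every generator of $I$ divisible by $x_0$ is divisible by some $x_j$ with $j\ge 1$, and $d_{x_j}(I)=1$. Hence $I:x_0$ is a monomial ideal containing, for each such generator $M$, the monomial $M/x_0$ divisible by some $x_j$.

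We then check two facts about $I':=I:x_0$.
\begin{enumerate}[(a)]
\item The $f_k$, $k<q$, remain pairwise separated stars of degree one on $I'$. Minimal generators of $I'$ divide generators of $I$ after removing a power of $x_0$, so the divisibility conditions transfer. Any new conditions for the center of $f_k$ hold because $x_0$ is not adjacent, in the sense of (ii), to the non-centers of $f_k$.
\item $f_q$ is still a star of degree one on $I'$. If $x_0$ no longer appears in $I'$, then $f_q$ is handled via a free-variable or component argument.
\end{enumerate}
Ideally the induction hypothesis, applied to the $q$ stars on $I'$, makes $f_1,\ldots,f_{q-1},f_q$ regular on $R/I'$. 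That is circular at length $q$, so we instead do an auxiliary induction on $d_{x_0}(I)$, the degree of $x_0$ among the generators of $I$, which drops under $I\mapsto I:x_0$. When $x_0$ becomes free, $f_q$ is a star whose center is a free variable. We then use the substitution $x_0\mapsto -(x_1+\cdots+x_t)$, or equivalently treat $x_0$ as a new component, and apply \Cref{c:different components}.

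\textbf{Sum term.} Next we treat $(J,x_0)=((I,x_0),f_1,\ldots,f_{q-1})$. Modulo $x_0$, the form $f_q$ becomes $x_1+\cdots+x_t$. The ring $R/(I,x_0)$ is $R'/I_0$ with $R'=R/(x_0)$ and $I_0=I|_{x_0=0}$. The other stars do not involve $x_0$ and remain separated stars of degree one on $I_0$; deleting generators only weakens the conditions, and a center of $f_k$ may become free, which is allowed. We then show $x_1+\cdots+x_t$ is regular on $R'/(I_0,f_1,\ldots,f_{q-1})$ by a second decomposition: we colon out and add the variables $x_1,\ldots,x_t$ one at a time. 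Colons by $x_j$ are again computed by \Cref{colon with neighbor}, since $x_j$ shares no support with the other stars. Here we use that $x_j$ is not a non-center of $f_k$ by (i) and that separation constrains the centers. Each step reduces to a smaller instance, and the final step reduces to the case where some $x_j$ is free, where the linear form is regular by the same component argument.

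\textbf{Main obstacle.} The hardest part is organizing these nested inductions so that every intermediate ideal still satisfies the hypotheses: separation, degree one, and correct colon computations. The subtle point is that after coloning, the stars $f_k$ must remain stars on the new ideal. This needs every generator $M/x_0$ containing the center $y_0$ of $f_k$ to still contain a non-center $y_i$. That follows because $y_0x_j\nmid M$ and $x_0y_i\nmid M$, so the $x_0$-part and the $f_k$-part of $M$ do not interact. We would verify this carefully first. We would induct on the total $\sum_k$ of the degrees of the centers together with $q$, using Remark~\ref{r:assExt} to pass between $R'$ and $R$ when variables become free.
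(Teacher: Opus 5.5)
\textbf{There is a genuine gap in the sum term.} Splitting along the center $x_0$ gives only the inclusion $\Ass(R/J)\subseteq\Ass(R/(J:x_0))\cup\Ass(R/(J,x_0))$. So your argument needs $f_q$ to avoid every associated prime of $R/(J,x_0)$, that is, $x_1+\cdots+x_t$ must be a nonzerodivisor on $R'/(I_0,f_1,\ldots,f_{q-1})$. This is false in general. Once the center is killed, $x_1+\cdots+x_t$ is no longer a star, and the $x_j$ can occur in generators of $I_0$ that contain no other $x_i$.

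Here is a concrete case. Take $R=\KK[u,v,x_0,x_1,y]$, $I=(uv,x_0x_1,x_1y)$, $f_1=u+v$ and $f_2=x_0+x_1$. These are separated stars of degree one. Then $(J,x_0)=(x_0,uv,x_1y,u+v)$. Modulo this ideal $f_2\equiv x_1$, and $x_1y\in(J,x_0)$ while $y\notin(J,x_0)$. So $f_2$ lies in the associated prime $(x_0,x_1,u,v)$ of $R/(J,x_0)$, even though $f_2$ is regular on $R/J$.

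Your ``second decomposition'' cannot end at a free variable either. After adding $x_2,\ldots,x_t$ you are left with $x_1$ modulo $(I_0,x_2,\ldots,x_t,f_1,\ldots,f_{q-1})$, and nothing makes $x_1$ free there; in the example, $x_1y$ survives. The colon half of your argument, with the auxiliary induction on $d_{x_0}(I)$, is fine. The decomposition as a whole cannot close.

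\textbf{The repair, which is the paper's argument:} split along a non-center variable and save the center for last. Use $0\to R/(J:x_t)\to R/J\to R/(J,x_t)\to 0$.
\begin{itemize}
\item Colon side: since $d_{x_t}(I)=1$ and $x_t\notin\supp(f_k)$, \Cref{colon with neighbor} gives $J:x_t=((I:x_t),f_1,\ldots,f_{q-1})$, and $x_t$ is free there. By \Cref{r:assExt}, every associated prime is extended from the ring without $x_t$. Such a prime cannot contain $f_q$, whose coefficient on $x_t$ is $1$. No induction on $d_{x_0}(I)$ is needed.
\item Sum side: $(J,x_t)=((I,x_t),f_1,\ldots,f_{q-1})$, and $x_0+\cdots+x_{t-1}$ is still a star of degree one on $(I,x_t)$, separated from the others. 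So induction on $t$ applies, together with the fact that $x_t$ lies in every associated prime.
\item Base case $t=1$: adding $x_1$ makes every generator of $I$ divisible by $x_0$ redundant. Hence $x_0$ is free modulo $(I,x_1,f_1,\ldots,f_{q-1})$, and $x_0+x_1$ avoids all its associated primes.
\end{itemize}

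A minor point: in your base case $d_{x_0}(I)=0$, \Cref{c:different components} does not literally apply, since $f_q$ need not lie in its own component. The free-variable/monic argument is what actually works there.
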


\begin{proof}
We proceed by induction. Note that $f_1$ is regular by \cite[Theorem~3.11]{FHM}, so if $q=1$ the statement holds. Now suppose that $q>1$ and for some $1\le s<q$ we have that $f_1, \ldots, f_s$ is a regular sequence on $R/I$. We want to show that $f_{s+1}$ is regular on $R/(I, f_1, \ldots, f_s)$. For simplicity write $f_{s+1}=z_0+\ldots+z_t$ and $J=(I, f_1, \ldots, f_s)$. We will prove 
that $f_{s+1}\not\in P$ for any $P \in \Ass(R/J)$.

If $t=0$, then $z_0$ is a free variable on $I$, that is $z_0\nmid M$ for all $M\in \G(I)$. Moreover, since $f_{s+1}=z_0$ is separated from $f_i$ for all $i\le s$, by Lemma \ref{l:comp}, we have $z_0$ is regular on $R/J$ and $z_0 \not\in P$ for any $P \in \Ass(R/J)$. Assume that  $t>0$. 

Consider the following exact sequence
\begin{eqnarray}\label{ses z_1}
0\rightarrow R/(I, f_1, \ldots, f_s):z_t\rightarrow R/J \rightarrow R/(I, f_1, \ldots, f_s,z_t)\rightarrow 0.
\end{eqnarray}
It follows that
$$\Ass(R/J) \subseteq \Ass(R/[(I,f_1, \ldots, f_s):z_t]) \cup \Ass(R/(I,f_1, \ldots, f_s,z_t)).$$ 
By  Lemma~\ref{colon with neighbor}
\begin{eqnarray*} 
A=(I, f_1, \ldots, f_s):z_t&=&((I:z_t), f_1, \ldots, f_s)=(I, N_I(z_t), f_1, \ldots, f_s),
\end{eqnarray*}
where $N_I(z_t)=\{ M \in R\mid Mz_t\in \mathcal{G}(I)\}$. 
Notice that $z_t$ is a free variable on $R/A$ and, therefore, by Remark \ref{r:assExt}, $f_{s+1}=z_0+z_1+\ldots+z_t\not\in P$ for any $P\in \Ass(R/A)$. 

Next we consider the ideal $B=(I, f_1, \ldots, f_s, z_t)$. We will show that $f_{s+1}$ is regular on $R/B$. We proceed by induction on $t$. Suppose $t=1$. That means $f_{s+1}=z_0+z_1$ and for every monomial $M\in \mathcal{G}(I)$ such that $z_0\mid M$, then $z_1\mid M$. After removing any redundant monomial generators of $B$ that are divisible by $z_1$, we may assume that 
there is no minimal monomial generator in $B$ that is divisible by $z_0$. Since $z_0$ does not appear in $f_1, \ldots, f_s$, the generators of $B$ are in $R'$, where $R=R'[z_0]$, and
$R/B \cong (R'/B)[z_0]$. Particularly, $z_0$ is regular on $R/B$ and so $z_0\not\in P$ for any $P\in \Ass(R/B)$.  Moreover, $z_1\in P$ for all $P\in \Ass(R/B)$. Hence $f_{s+1}\not \in P$ for any $P\in \Ass(R/B)$. From the short exact sequence (\ref{ses z_1}), we conclude that $f_{s+1} \not \in P$ for any $P\in \Ass(R/J)$. In particular, $f_{s+1}$ is regular on $R/J$. 

Suppose now that $t>1$. By induction we may assume that if $f_1, \ldots, f_s$ are pairwise separated stars of degree one on a monomial ideal $K$ and $g$ is a star of degree one on $K$ such that $g$ is separated from $f_1, \ldots, f_s$, then $g$ is regular on $R/(K, f_1, \ldots, f_s)$ provided that $g$ is a star of degree one in at most $t$ variables.  
Let $f_{s+1}'=z_0+\ldots+z_{t-1}$ and notice that $f_{s+1}'$ is a star of degree one on $K$, where $K=(I, z_t)$. Since $f_1, \ldots, f_{s+1}$ are pairwise separated on $I$, we have $f_1, \ldots, f_{s}, f_{s+1}'$ are pairwise separated stars on $K$. By induction, $f_{s+1}'$ is regular on $R/B$, since $B = (K,f_1, \ldots, f_s)$. This implies that $f_{s+1}' \not\in P$ for all $P\in \Ass(R/B)$. Moreover, since $z_t \in K$, we have $z_t\in P$ for any $P\in \Ass(R/B)$. It follows that $f_{s+1}=f_{s+1}'+z_t\not \in P$ for all $P\in \Ass(R/B)$.
 Therefore, by the short exact sequence (\ref{ses z_1}) as before, we deduce that $f_{s+1}$ is regular on $R/J$, as claimed. 
\end{proof}

Next we show that the regular sequences from \Cref{thm: stars reg seq} can at times be combined with other forms of linear regular sequences to form longer regular sequences.

\begin{corollary}\label{c:binom-extn} Let $I$ be a monomial ideal in a polynomial ring $R$ over an algebraically closed field. Let $f_1, \ldots, f_q$ be stars of degree one on $I$ that are pairwise separated. Let $\{g_i=a_i+b_i\}_{i=1}^{\ell}$ be a regular sequence of binomials on $R/I$ for some $\ell\ge 1$ such that for all $i\in [\ell]$, 
$\supp(g_i)\cap \supp(f_j)=\emptyset$ for all $i\in [\ell]$ and all $j\in [q]$.
Then $g_1, \ldots, g_{\ell}, f_1, \ldots, f_q$ is a regular sequence on $R/I$. 
\end{corollary}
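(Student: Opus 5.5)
We would run the argument of \Cref{thm: stars reg seq} with the binomials $g_1,\ldots,g_\ell$ carried along as extra generators of the ideal. We read the hypothesis $\supp(g_i)\cap\supp(f_j)=\emptyset$ as saying that no variable occurring in $f_j$ divides a monomial of $g_i$. This is automatic when the $g_i$ are linear, and we expect it to be the intended meaning in general. By hypothesis $g_1,\ldots,g_\ell$ is regular on $R/I$. So it suffices to show, for $0\le s<q$, that $f_{s+1}$ is regular on $R/J$, where
$$J=(I,g_1,\ldots,g_\ell,f_1,\ldots,f_s).$$
We do this by showing $f_{s+1}\notin P$ for every $P\in\Ass(R/J)$. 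Note that $s=0$ is allowed, which covers the first star.

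The key step is a slightly more general statement, proved exactly as in the proof of \Cref{thm: stars reg seq}. Let $K$ be a monomial ideal and $f_1,\ldots,f_{s+1}$ pairwise separated stars of degree one on $K$. Let $h_1,\ldots,h_m$ be arbitrary polynomials none of whose monomials is divisible by a variable of $f_{s+1}$. Then $f_{s+1}$ is regular on $R/(K,h_1,\ldots,h_m,f_1,\ldots,f_s)$. We would prove this by induction on the number $t+1$ of variables of $f_{s+1}=z_0+\cdots+z_t$.

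If $t=0$, then $z_0$ is free on $K$ and appears in none of the $h_i$ or $f_i$. Hence $R/J\cong (R'/J')[z_0]$ and $z_0$ is regular, with no need for \Cref{l:comp}. If $t\ge 1$, use the short exact sequence (\ref{ses z_1}) with $z_t$. Since $z_t$ is coprime to every monomial of the $h_i$ and of $f_1,\ldots,f_s$, \Cref{colon with neighbor} gives
$$J:z_t=\bigl((K:z_t),h_1,\ldots,h_m,f_1,\ldots,f_s\bigr).$$
The variable $z_t$ is free modulo this ideal: it does not occur in $(K:z_t)$ because $d_{z_t}(K)=1$, and it does not occur in the $h_i$ or $f_i$. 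So by \Cref{r:assExt} no associated prime of $R/(J:z_t)$ contains $f_{s+1}$.

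For $B=(J,z_t)$, separate the cases $t=1$ and $t>1$. When $t=1$, every generator of $K$ divisible by $z_0$ is divisible by $z_1$, so modulo $z_1$ the variable $z_0$ disappears from the monomial generators. It also does not occur in the $h_i$ or $f_i$, so $z_0$ is regular on $R/B$. Every associated prime contains $z_1$ and not $z_0$, hence not $f_{s+1}$. When $t>1$, apply the induction hypothesis to $K'=(K,z_t)$ and the star $z_0+\cdots+z_{t-1}$; separatedness and the degree-one condition survive adding $z_t$, as in the theorem. This shows $z_0+\cdots+z_{t-1}$ avoids every associated prime of $R/B$, while $z_t$ lies in all of them, so $f_{s+1}$ avoids them too. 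Combining the two pieces via the exact sequence gives the claim.

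Applying this with $K=I$ and $\{h_i\}=\{g_i\}$ for each $s$ finishes the proof. The main obstacle, and the only genuinely new point, is checking that the $g_i$ never interfere. This means verifying the coprimality hypothesis of \Cref{colon with neighbor}, and checking that $z_0$ and $z_t$ remain absent from the non-monomial generators so that \Cref{r:assExt} and the polynomial-extension argument still apply. This is precisely what the disjoint-support hypothesis supplies, and it is also why we use the direct polynomial-extension argument in the case $t=0$ rather than \Cref{l:comp}.
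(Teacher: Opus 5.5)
Your proof is correct, but it follows a different route from the paper.

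The paper does not reopen the proof of \Cref{thm: stars reg seq}. It uses the isomorphism from \cite{FHM}, $R/(I,g_1,\ldots,g_\ell)\cong R'/I'$, where $I'$ is the monomial ideal obtained by substituting $a_i$ for $b_i$ in the generators of $I$. It observes that this substitution never touches a star variable, so the star, degree-one and separation conditions survive. It then applies \Cref{thm: stars reg seq} to $R'/I'$ as a black box. That is shorter, but it depends on each $g_i$ being a sum of two variables, so that the quotient is again a polynomial ring modulo a monomial ideal.

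You instead show that the inductive argument tolerates arbitrary extra generators whose monomials avoid the variables of $f_{s+1}$. These generators enter only through the coprimality hypothesis of \Cref{colon with neighbor} and through the steps where $z_t$ or $z_0$ must be absent from every generator. This buys several things:

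\begin{enumerate}
\item It gives the conclusion for any regular sequence $g_1,\ldots,g_\ell$ of polynomials whose monomials avoid the star variables, not just linear binomials.
\item You handle $t=0$ by a polynomial-extension argument rather than \Cref{l:comp}. The case $s=0$ is covered by your own lemma rather than by \cite[Theorem~3.11]{FHM}. So your argument is self-contained and never uses that $\KK$ is algebraically closed.
\item Your formulation shows that the only property of $f_1,\ldots,f_s$ you use is that they share no variable with $f_{s+1}$; they could simply be absorbed into the $h_i$. Condition (ii) of \Cref{d:separated} therefore plays no role in the argument.
\end{enumerate}
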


 \begin{proof}
 By \cite{FHM}, if $g_1 = a_1+b_1, \ldots, g_\ell = a_\ell+b_\ell$ is a regular sequence of binomials, then 
$$R/(I,g_1,\ldots,g_\ell) \cong R'/I'$$
where $I'$ is formed from $I$ by, for $1\le i \le \ell$, replacing $b_i$ by $a_i$ in each generator of $I$, and $R' \cong R/(b_1, \ldots, b_t)$. Since for all $i\in [\ell]$, $\{a_i, b_i\}$ is disjoint from any of the variables in the stars $f_1, \ldots, f_q$, passing to $I'$ does not alter the degree of any variable in $f_i$. In addition, if $M$ is a monomial generator of $I'$, then  
$$M= \frac{Na_1^{\deg_{b_1}N}\cdots a_\ell^{\deg_{b_{\ell}}N}}{b_1^{\deg_{b_1}N}\cdots b_{\ell}^{\deg_{b_{\ell}}N}},$$ 
for some monomial generator $N$ of $I$. If $x_0$ is the center of $f_i$ for some $i$, then $x_0 \mid M$ if and only if $x_0 \mid N$, in which case some other term of $f_i$ divides $N$ and thus $M$. Thus $f_i$ are still stars of degree one on $I'$. Moreover, given the form of $M$ and the fact that the stars $f_1,\ldots, f_q$ do not share any variables with the binomials $g_i$, it follows that the stars remain separated on $I'$. Now by \Cref{thm: stars reg seq}, $f_1, \ldots f_q$ form a regular sequence on $R'/I'$, and hence $g_1, \ldots, g_{\ell},f_1,\ldots,f_q$ is a regular sequence on $R/I$. 
\end{proof}

To illustrate the utility of \Cref{c:binom-extn}, we provide examples of linear binomial regular sequences. The first example will use {\it leaf pairs} for general, not necessarily square-free, monomial ideals, as defined in \cite{FHM}.

\begin{definition}[\protect{\cite[Definition~4.10]{FHM}}]
Let $I$ be a monomial ideal in a polynomial ring $R$. A variable $a\in R$ is called a {\it{ leaf }} in $I$ if there exists a unique monomial generator $M$ of $I$ such that $a\mid M$. Two variables $a,b$ are called a leaf pair in $I$ if $a,b$ are leaves in $I$ and there exist monomials $M_1, M_2, Z, W \in R$ with $\gcd(Z,W)=1$ such that $M_1=aZ$, $M_2=bW$, $\deg_{a}M_1=\deg_bM_2=1$, and $ZW\in\mathcal{G}(I)$.
\end{definition}

It was shown in \cite[Theorem~4.11]{FHM} that any sequence $\{a_i,b_i\}_{i=1}^{\ell}$ of disjoint leaf pairs forms a regular sequence $a_1+b_1, a_2+b_2, \ldots, a_{\ell} + b_{\ell}$ of degree one binomials. Thus \Cref{c:binom-extn} can be used to combine linear stars and leaf pairs to form regular sequences for general monomial ideals.

When $I$ is the edge ideal of a graph, there is a classification of linear binomials that are regular on $R/I$, which extends to sets of linear binomials that form a regular sequence, through the use of a property studied in graph theory referred to as ``Property $P$". Originally defined for a perfect matching, which then creates a maximal regular sequence, Property $P$ can be viewed as a property of individual edges and can be extended to graphs with loops, as was done in \cite{BrennanMorey1}, via the following definition.

\begin{definition}\cite[Definition 4.1]{BrennanMorey1}\label{Property P}
 Let $G$ be a graph, potentially with loops, and $e=\{x,y\}$ an edge of $G$. Then $e$ is said to have {\em Property $P$} if $\{x^2\}, \{y^2\} \not\in E(G)$ and for any vertices $a,b$ of $G$, if $\{a,x\}, \{y,b\}$ are edges of $G$, then $a \neq b$ and $\{a,b\}$ is an edge of $G$.
\end{definition}

By \cite[Theorem 3.6]{BrennanMorey1} for a simple graph and by \cite[Theorem 4.7]{BrennanMorey1} for a graph with loops, an edge $\{x,y\}$ has Property $P$ if and only if $x+y$ is regular on $R/I(G)$. More generally, by \cite[Theorem 6.1]{BrennanMorey1}, for any $x,y$ vertices of $G$, $x+y$ is regular if and only if either $\{x,y\}$ is an edge with Property $P$, or $\{x,y\}$ is not an edge, but in the graph $G'$ whose edges are $E(G) \cup \{x,y\}$, the new edge $\{x,y\}$ has Property $P$.
 Moreover, a set of disjoint edges that each satisfy Property $P$ leads to a regular sequence assuming that for each $j<i$ either $N_G(x_i) \cap \{x_j,y_j\} = \emptyset$ or $N_G(y_i) \cap \{x_j,y_j\}=\emptyset$  (see \cite[Theorem 5.7]{BrennanMorey1}). Thus for edge ideals of graphs, \Cref{c:binom-extn} can be used to combine linear stars and edges (or non-edges) with Property $P$ to form regular sequences.

%%%%%%%%%%%%%%%%%%%%%%%%%%%%%%%%%%

\section{Associated primes} \label{sec.AssPrimes}

In this section we investigate when a sequence of disjoint stars on an edge ideal of a graph $G$ forms a maximal regular sequence, thus realizing the depth of $R/I(G)$.  Recall that for a graph $G$, the \emph{star packing parameter}, $\alpha_2(G)$, is the maximum number of disjoint stars of degree one on $I(G)$. As a corollary of \Cref{thm: stars reg seq} one can see that when $I$ is the edge ideal of a graph $G$, then $\depth R/I \ge \alpha_2(G)$. This bound recovers earlier results of  \cite{DaoSchweig1} and \cite{FHM}. Moreover, it is also well-known that $\depth R/I(G)$ can be larger than $\alpha_{2}(G)$, as will be illustrated in \Cref{ex: letter H}. The first main result of this section will provide conditions under which this bound is sharp, meaning the depth of $R/I(G)$ is precisely $\alpha_2(G)$.

A key tool in proving the depth is realized by a particular regular sequence will be the observation that when a set of disjoint stars $f_1, \ldots, f_q$ is a maximal regular sequence, then $\depth R/(I, f_1, \ldots, f_q)=0$ and in particular, the homogeneous maximal ideal of $R$ becomes an associated prime of $(I, f_1,  \ldots, f_q)$. This naturally leads to the need to identify the associated primes of $(I, f_1, \ldots, f_q)$, which will be the second focus of this section.

\begin{example}\label{ex: letter H}
    Let $I(G)=(ab,bc,be,de,ef)$ be the edge ideal of the graph $G$ shown below.
    \begin{tikzpicture}[scale=1]

\fill (0,1.5) circle (2pt);
\fill (2,1.5) circle (2pt);
\fill (4,1.5) circle (2pt);
\fill (0,0) circle (2pt);
\fill (2,0) circle (2pt);
\fill (4,0) circle (2pt);

\node[above] at (0,1.5) {$a$};
\node[above] at (2,1.5) {$b$};
\node[above] at (4,1.5) {$c$};
\node[below] at (0,0) {$d$};
\node[below] at (2,0) {$e$};
\node[below] at (4,0) {$f$};

\draw (0,1.5) -- (2,1.5); 
\draw (2,1.5) -- (4,1.5); 
\draw (2,1.5) -- (2,0);   
\draw (0,0) -- (2,0);     
\draw (2,0) -- (4,0);     

\end{tikzpicture}

It is straightforward to verify that $\alpha_2(G)=2$. However, computation in Macaulay2 \cite{M2} shows that $\depth R/I=3$. The sequence $a+b,d+e, c+f$ is a maximal regular sequence, by \cite[Theorem~4.11]{FHM} and \Cref{c:binom-extn} since  $a+b, d+e$ are disjoint stars and $c, f$ are a pair of leaves of distance $3$ apart.
 \end{example}

As illustrated in \Cref{ex: letter H}, even though $\depth R/I\neq \alpha_2(G)$ it is still possible to produce a maximal regular sequence by combining disjoint stars with leaf pairs or non edges with Property $P$, see also \Cref{c:binom-extn}.

Our first goal is to establish conditions for the sequences of disjoint starts to form maximal sequences.  We first prove a straightforward observation for the edge ideal of a graph.

\begin{lemma}\label{l:max}
Let $I=I(G)$ be the edge ideal of a graph $G$ and let $f_1, \ldots, f_q$ be stars centered at variables $x_{1_0}, \ldots, x_{q_0}$. If $M_0$ is a monomial, then 
$$\bigcup_{\{i\;:\; x_{i_0}\mid M_0\}} (\supp(f_i)) \subseteq (I,f_1, \ldots, f_q):M_0.$$
\end{lemma}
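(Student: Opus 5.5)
Fix an index $i$ with $x_{i_0}\mid M_0$ and write $f_i = x_{i_0} + x_{i_1} + \cdots + x_{i_t}$. Since $(I,f_1,\ldots,f_q):M_0$ is an ideal, the union is contained in it exactly when each variable in $\supp(f_i)$ lies in it. So the plan is to show that every variable of $f_i$ multiplies $M_0$ into $(I,f_1,\ldots,f_q)$. I read $\supp(f_i)$ as the set of variables $x_{i_0},\ldots,x_{i_t}$, which is consistent with the paper's definition of support since $f_i$ is linear.

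First take a leaf variable $x_{i_j}$ with $j\ge 1$. By the definition of a star on the edge ideal $I=I(G)$, the generator $x_{i_0}x_{i_j}$ of $I$ involves the center, so some variable of $f_i$ other than the center divides it. In the graph setting this says $x_{i_j}$ is a neighbor of $x_{i_0}$; for a simple graph this is forced, because $x_{i_0}x_{i_j}$ must be an edge for $x_{i_j}$ to be a leaf of the star. Hence $x_{i_0}x_{i_j}\in I$. Since $x_{i_0}\mid M_0$, it follows that $M_0x_{i_j}\in I\subseteq (I,f_1,\ldots,f_q)$.

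Now take the center $x_{i_0}$ itself. We have
\[
M_0x_{i_0} = M_0 f_i - \sum_{j=1}^{t} M_0 x_{i_j}.
\]
The first term on the right lies in $(f_i)$, and each remaining term lies in $I$ by the previous step. Therefore $M_0x_{i_0}\in (I,f_1,\ldots,f_q)$.

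The only delicate point is justifying $x_{i_0}x_{i_j}\in I$ for every $j\ge1$. The definition of a star only guarantees that each edge at the center hits some leaf; it does not by itself say each leaf is adjacent to the center. So I will use the graph-theoretic meaning of a star in this section, namely that the $x_{i_j}$ are neighbors of $x_{i_0}$, as in the definition of $\alpha_2(G)$ via disjoint stars. If the intended setting allows non-neighbors among the leaves, the statement would need that adjacency assumption added.
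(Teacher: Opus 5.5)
Your argument is correct and is essentially the paper's proof: each non-center variable $y$ of $f_i$ satisfies $x_{i_0}y\in I$, hence $y\in I:M_0$, and the center is then recovered from $M_0x_{i_0}=M_0f_i-\sum_{j\ge 1}M_0x_{i_j}$. The paper likewise takes $x_{i_0}y\in I$ for granted (the graph-theoretic reading of a star), so your caveat is well placed. Under the literal definition the lemma fails: for $I=(x_0x_1,x_2x_3)$, $f=x_0+x_1+x_2$ and $M_0=x_0$ one has $x_0x_2\notin(I,f)$. You should, however, drop the sentence claiming adjacency is ``forced'' for a simple graph, since that reasoning is circular, and state adjacency as a hypothesis instead.
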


\begin{proof}
First, note that if $M_0 \in (I,f_1, \ldots, f_q)$, then the result is trivial. So assume $M_0 \not\in (I,f_1, \ldots, f_q)$. Let $i\in [q]$ with $x_{i_0} \mid M_0$. Then for any $y \in \supp(f_i)$ if $y \ne x_{i_0}$, then $x_{i_0}y \in I$. Thus we have $y \in I : M_0 \subseteq (I,f_1, \ldots, f_q) : M_0$.  Since $f_i\in (I, f_1, \ldots, f_q):M_0$
 and for all $y \in \supp(f_i)$ if $y \ne x_{i_0}$, then  $y \in I : M_0 \subseteq (I,f_1, \ldots, f_q) : M_0$, then $x_{i_0} \in (I,f_1, \ldots,f_q):M_0$ as well.
\end{proof}

Note that if we have a collection of stars that use all the variables of the ring, then we obtain the following result. 

\begin{theorem}\label{prop: All variables covered}
Let $I=I(G)$ be the edge ideal of a graph $G$ in the polynomial ring $R$ over an algebraically closed field with homogeneous maximal ideal $\m$.  Let $f_1, \ldots, f_q$ be disjoint stars on $I$ such that $\m=\bigcup_{i=1}^q (\supp(f_i))$ and $M_0 = \prod_{i=1}^q x_{i_0} \not\in (I,f_1, \ldots, f_q)$. Then $\m \in \Ass(R/ (I, f_1, \ldots, f_q))$ and $\depth R/I=q$. 
\end{theorem}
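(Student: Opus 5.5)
The plan is to exhibit $M_0$ as a witness for $\m$ being associated, and then combine this with \Cref{thm: stars reg seq}. Set $J=(I,f_1,\ldots,f_q)$ and write the centers as $x_{1_0},\ldots,x_{q_0}$.

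First I compute $J:M_0$. Every center $x_{i_0}$ divides $M_0$, so \Cref{l:max} gives
\[
\bigcup_{i=1}^q \supp(f_i)\subseteq J:M_0 .
\]
By hypothesis the left side is the full set of variables. Hence $\m\subseteq J:M_0$. Since $M_0\notin J$, the colon is proper, so $J:M_0=\m$. Thus $\m=\Ann_{R/J}(\overline{M_0})$ with $\overline{M_0}\neq 0$, which gives $\m\in\Ass(R/J)$ and $\depth R/J=0$.

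Next I establish regularity. For edge ideals, disjoint stars are separated, as noted after \Cref{d:separated}. Each star in a graph is automatically of degree one, because every variable has degree at most one in the square-free generators $xy$. Loops would spoil this, but the statement concerns a graph $G$, which I take to be simple. So \Cref{thm: stars reg seq} applies, using that $\KK$ is algebraically closed, and $f_1,\ldots,f_q$ is a regular sequence on $R/I$. Hence
\[
\depth R/I = q+\depth R/J = q .
\]

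The main point needing care is the application of \Cref{l:max}. Its proof uses that for $y\in\supp(f_i)$ with $y\neq x_{i_0}$, the monomial $x_{i_0}y$ lies in $I$, i.e.\ $y$ is a neighbour of the center. In the graph setting I will check that this is built into the notion of a star on $I(G)$: the leaves of the star are neighbours of the center, as in the star-graph picture. If the definition were read as allowing non-neighbour variables in $f_i$, I would instead argue directly. Each neighbour $y$ of $x_{i_0}$ in $\supp(f_i)$ kills $M_0$ modulo $I$. Then $x_{i_0}M_0\equiv -\sum_{y\neq x_{i_0}}yM_0$ modulo $f_i$, so any stray variable would have to be handled separately. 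I expect the intended definition to make this issue vacuous, so the whole argument reduces to the colon computation above.
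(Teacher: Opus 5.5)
Your proof is correct and is essentially the paper's: \Cref{l:max} gives $(I,f_1,\ldots,f_q):M_0=\m$, and since disjoint stars on an edge ideal are separated (and automatically of degree one), \Cref{thm: stars reg seq} yields $\depth R/I=q+\depth R/(I,f_1,\ldots,f_q)=q$. Your caveat about Lemma~\ref{l:max} is right: the paper also tacitly takes a star on $I(G)$ to be $x_0+\sum_{y\in N_G(x_0)}y$, but stray variables could not be ``handled separately'', because with them the statement is false. For example, $I=(ab,cd)$ with the single star $a+b+c+d$ centered at $a$ (degree one under the general definition) has $M_0=a\notin(I,a+b+c+d)$ while $\depth R/I=2$, so the closed-neighborhood reading is required, not just convenient.
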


\begin{proof}
Since $M_0 \not\in (I,f_1, \ldots, f_q)$, then $R \ne (I,f_1, \ldots, f_q) : M_0$. Since $\m=\bigcup_{i=1}^q (\supp(f_i))$, then $\m = (I,f_1, \ldots,f_q):M_0$, by \Cref{l:max}. 
Thus $\m \in \Ass(R/ (I, f_1, \ldots, f_q))$ 
and in particular, $\depth R/(I, f_1, \ldots, f_q)=0$. On the other hand,  since $f_1, \ldots, f_q$ are disjoint stars and $I$ is the edge ideal of a graph, then $f_1,\ldots,f_q$ are separated, so $f_1, \ldots, f_q$ is a regular sequence on $R/I$ by \Cref{thm: stars reg seq} and the conclusion follows. 
\end{proof}

Even though the 
hypothesis that $M_0 = \prod_{i=1}^q x_{i_0} \not\in (I,f_1, \ldots, f_q)$ is necessary in the proof of \Cref{prop: All variables covered}, it may still be possible for $\m$ to be an associated prime of $R/(I,f_1, \ldots, f_q)$ without this assumption, as seen in the following example. 

\begin{example}\label{ex: M in J}
   Consider the ideal
    $I=(ab,af,bc,bd, ce,cf,ch,ci,ed,df,hg,gi)$ in the polynomial ring $ R=k[a, \ldots, i]$ and let 
    $f_1=a+b+f$, $f_2=e+c+d$, and $f_3=g+h+i$. Then one can verify that  $M_0=aeg\in (I, f_1, f_2, f_3)$. 
    
    On the other hand, $f_1, f_2, f_3$ is a regular sequence on $R/I$ and one can check using Macaulay2 \cite{M2} that $\depth (R/I)=3$. Therefore, $\m \in \Ass(R/(I, f_1, f_2, f_3))$. Indeed, $\m = (I, f_1, f_2, f_3):ae$.
\end{example}

In both \Cref{prop: All variables covered} and \Cref{ex: M in J} the homogeneous maximal ideal is an embedded associated prime of $(I, f_1, \ldots, f_q)$. In the next result we show that we have a clear format for how the minimal primes of the ideal $(I, f_1, \ldots, f_q)$ arise. 

\begin{proposition}\label{lem: minimal prime format}
Let $I$ be a monomial ideal in a polynomial ring $R$ over a field $\KK$ and let $f_1, \ldots, f_q$ be homogeneous polynomials of degree one with pairwise disjoint support. Then
\begin{enumerate}[$($a$)$]
    \item For any monomial prime ideal $Q$, the ideal $(Q, f_1, \ldots, f_q)$ is a prime ideal.
    \item If $P\in \Min(R/(I, f_1, \ldots, f_q))$, then $P=(Q, f_1, \ldots, f_q)$, where  $Q\in \Min(R/I)$.
\end{enumerate}
\end{proposition}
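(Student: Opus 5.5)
For part (a), I plan to compute the quotient directly and show it is a domain. Write $Q=(x_j : j\in S)$ for some set $S$ of variables; this is the form of every monomial prime. Killing the variables in $S$ gives an isomorphism
$$R/(Q,f_1,\ldots,f_q)\cong R''/(\bar f_1,\ldots,\bar f_q),$$
where $R''=\KK[x_j : j\notin S]$ is a polynomial ring. Here $\bar f_i$ is $f_i$ with the variables of $S$ set to zero.

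Each $\bar f_i$ is either zero or a homogeneous linear form in the remaining variables, because the $f_i$ are homogeneous of degree one with no constant term. A polynomial ring modulo an ideal generated by linear forms is again a polynomial ring. To see this, after a linear change of coordinates the nonzero forms among the $\bar f_i$ span a subspace, and we may take a basis of it as part of a coordinate system. The disjointness of supports even makes the nonzero $\bar f_i$ linearly independent, since each involves variables no other one uses, so we may simply eliminate one variable from each nonzero $\bar f_i$. Either way the quotient is a polynomial ring over $\KK$, hence a domain. It is also nonzero, because all the forms lie in the homogeneous maximal ideal. So $(Q,f_1,\ldots,f_q)$ is prime.

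For part (b), let $P$ be a minimal prime of $(I,f_1,\ldots,f_q)$. Since $P\supseteq I$, $P$ contains a minimal prime $Q$ of $I$; we choose $Q$ among the minimal primes of $I$ contained in $P$. Since $I$ is a monomial ideal, $Q$ is a monomial prime, generated by a subset of the variables. Then $(I,f_1,\ldots,f_q)\subseteq (Q,f_1,\ldots,f_q)\subseteq P$, and by part (a) the middle ideal is prime. Minimality of $P$ over $(I,f_1,\ldots,f_q)$ therefore forces $P=(Q,f_1,\ldots,f_q)$ with $Q\in\Min(R/I)$.

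The only step needing any care is (a). There I must make sure the forms have no constant term, so the quotient is a nonzero polynomial ring and $(Q,f_1,\ldots,f_q)$ is proper. I must also note that some $\bar f_i$ may vanish, which is harmless. Everything else is formal.
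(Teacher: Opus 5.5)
Your proposal is correct and follows the paper's proof. For (a), the paper likewise drops the variables of $Q$ from each $f_i$ and argues that adjoining the resulting disjoint linear forms one at a time keeps the quotient a domain; (b) is the same minimality argument. Your explicit identification of the quotient as a polynomial ring is, if anything, a slightly cleaner justification of (a), since it handles directly both the passage from irreducible to prime and the possibility $\bar f_i=0$.
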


\begin{proof}
\par (a) Let $Q$ be  a monomial prime ideal and consider the ideal $(Q, f_1, \ldots, f_q)$. For every $i\in [q]$ write  $f_i= \sum_j c_{i,j}x_{i,j}$, where $c_{i,j} \in \KK$ and $x_{i,j}$ are variables. Let 
$$\widehat{f}_i = \sum \limits_{x_{i,j}\in \supp(f_i)\setminus Q } c_{i,j}x_{i,j}.$$
Then $(Q, f_1, \ldots, f_q)=(Q, \widehat{f_1}, \ldots, \widehat{f_q})$, which is an ideal generated by the variables in $Q$ together with the linear sums $\widehat{f}_i$ where the sets $\supp(\widehat{f}_i)$, $\supp(\widehat{f}_j)$, $Q$ are pairwise disjoint for all $i \ne j$.   
Notice that $R/Q$ is a domain and the image of 
each $\widehat{f_i}$ in $R/(Q, \widehat{f_1}, \ldots, \widehat{f}_{i-1})$ is an irreducible polynomial. Therefore $R/(Q, \widehat{f_1}, \ldots, \widehat{f_q})$ is a domain and thus $(Q, f_1, \ldots, f_q)$ is a prime ideal.

\par (b) Let $P\in \Min(R/(I, f_1, \ldots, f_q))$. Since $I\subseteq (I, f_1, \ldots, f_q)$ and $P$ is prime there exists a minimal prime $Q$ of $I$ such that $Q\subseteq P$. Hence $(Q, f_1, \ldots, f_q)\subseteq P$. Now, by part (a) we know that $(Q, f_1, \ldots, f_q)$ is a prime ideal and $(I, f_1, \ldots, f_q) \subseteq (Q, f_1, \ldots, f_q)$. Thus $(Q, f_1, \ldots, f_q)=P$, by the minimality of $P$. 
\end{proof}

In the next example we show  that the converse of part  (b) in \Cref{lem: minimal prime format} does not hold in general. Computations for this example were done using Macaulay2 \cite{M2}.

\begin{example}\label{e:min to ass}
    We revisit \Cref{ex: letter H}. Notice that $a+b$ is a regular element on $R/I$. Note that $Q_1=(e,b)$ and $Q_2=(a,c,e)$ are minimal primes of $R/I$. It is straightforward to verify that $(Q_1, a+b)=(a,b,e)$ is a minimal  prime of $R/(I, a+b)$. On the other hand, $(Q_2,a+b)=(a,b,c,e)$ is an associated prime of $R/(I,a+b)$ and since $(Q_1, a+b)=(a,b,e)\in \Min(R/(I,a+b))$, then $(Q_2,a+b)=(a,b,c,e)$ is an embedded associated prime of $R/(I, a+b)$.
\end{example}

Note that \Cref{e:min to ass} gives an example of an ideal $I$ and a homogeneous polynomial $f$ of degree one for which there is a $Q \in \Min(R/I)$ with $(Q,f)\not\in \Min(R/(I,f))$. However, in this example, $(Q,f) \in \Ass(R/(I,f))$. Based on computational evidence, this occurs frequently, leading to the following conjecture.

\begin{conjecture} \label{conj.ass}
   Let $I=I(G)$ be the edge ideal of a graph $G$ in a polynomial ring $R$ over a field. Let $f_1,\ldots,f_q$ be separated stars on $I$. Then \begin{eqnarray*}
    P\in \Ass(R/(I,f_1,\ldots,f_q))  \text{ if and only if } P=(Q,f_1,\ldots,f_q),
\end{eqnarray*}
for some  $Q\in \Ass(R/I)=\Min(R/I)$.
\end{conjecture}

Although \Cref{conj.ass} is stated for edge ideals of graphs, it may also hold in greater generality for monomial ideals. However, the techniques developed in the remainder of this section are tailored specifically to edge ideals of graphs, which will be our focus here.

Supporting evidence for \Cref{conj.ass} is provided by the known description of associated primes of graphs with loops. When each of the $f_i$ is a binomial corresponding to a star on a graph $G$, that is, $f_i=x_i+y_i$ with $\{x_i,y_i\}\in E(G)$, and $y_i$ a leaf of $G$, then \Cref{conj.ass} can be seen to hold by applying \Cref{p:binomial version} to the situation where each $f_i$ is a star centered at a leaf $y_i$.  
Recall that for a graph $G$ and a vertex $x$, the neighborhood of $x$ in $G$ is defined by $N(x)=N_G(x) =\{a \in V(G) \mid \{a,x\} \in E(G)\}$ and the closed neighborhood of $x$ in $G$ is defined by $N[x]=N_G[x]=N(x) \cup \{x\}$.

\begin{proposition}\label{p:binomial version}
 Let $I = I(G)$ be the edge ideal of a graph $G$ in a polynomial ring $R$ over a field $\KK$. Let $f_1=x_1+y_1, \dots, f_q=x_q+y_q$ be disjoint binomial stars on $G$. Then, $P \in \Ass(R/(I,f_1, \dots, f_q))$ if and only if $P = (Q,f_1,\dots, f_q)$ for some $Q \in \Ass(R/I)= \Min(R/I)$.
\end{proposition}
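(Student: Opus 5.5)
The plan is to reduce to a monomial ideal by eliminating one variable from each binomial star, then transport associated primes back along the isomorphism.

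\emph{Step 1: normalize each star.} Since $f_i=x_i+y_i$ is a star, every edge through its center contains the other vertex, so the center is a leaf or isolated vertex of $G$ whose only possible neighbor is the other vertex. After renaming, assume $x_i$ is the center, $N_G(x_i)\subseteq\{y_i\}$, and the pairs $\{x_i,y_i\}$ are pairwise disjoint.

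\emph{Step 2: pass to a monomial ideal.} Let $R'=\KK[\text{variables}\setminus\{x_1,\dots,x_q\}]$ and consider the substitution $x_i\mapsto -y_i$. This gives an isomorphism
$$R/(I,f_1,\dots,f_q)\cong R'/I',\qquad I'=I(H)+(y_i^2 \mid \{x_i,y_i\}\in E(G)),$$
where $H=G\setminus\{x_1,\dots,x_q\}$. The only edge through $x_i$ is $x_iy_i$, it becomes $-y_i^2$, and all other generators are untouched. So $I'$ is the edge ideal of a graph with loops at (some of) the $y_i$. Under the isomorphism, a prime $P'\subset R'$ corresponds to $P=P'R+(f_1,\dots,f_q)$; moreover $P=(Q,f_1,\dots,f_q)$ for a monomial prime $Q$ exactly when $P'$ is the image of $Q$ under $x_i\mapsto -y_i$. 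The claim therefore becomes
$$\Ass(R'/I')=\{\overline{Q}\;:\;Q\in\Min(R/I)\},\qquad \overline{Q}=(Q\setminus\{x_i\})\cup\{y_i \mid x_i\in Q\}.$$

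\emph{Step 3: compute $\Ass(R'/I')$.} Because $x_i$ is a leaf, a minimal vertex cover of $G$ contains exactly one of $x_i,y_i$. It contains $x_i$ precisely when it omits $y_i$, hence contains $N(y_i)\setminus\{x_i\}$. So the sets $\overline{Q}$ are:
\begin{itemize}
\item the minimal covers of $G$ that contain $y_i$, left unchanged, and
\item the sets $(C\setminus\{x_i\})\cup\{y_i\}$ with $C\supseteq N_H(y_i)$.
\end{itemize}
Note that the second kind is generally embedded, consistent with $(y^2,yz)=(y)\cap(y^2,z)$.

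I would compare this list with the known description of associated primes of edge ideals of graphs with loops, the same description invoked as supporting evidence before the statement. That description says the associated primes are generated by vertex covers $C$ containing every looped vertex and satisfying a minimality condition in which a looped vertex $y$ may be kept even when all of $N(y)$ is in $C$. If the citation does not fit exactly, I would instead prove the description directly by induction on the number of loops, using the short exact sequence
$$0\to R'/(I':y_i)\to R'/I'\to R'/(I',y_i)\to 0 .$$
Here $(I',y_i)=(I(H\setminus y_i),y_i)$, whose associated primes are the minimal covers containing $y_i$. Also $I':y_i=(I(H\setminus y_i)+\text{remaining loops},\,N_H(y_i),\,y_i)$, handled by induction; this step uses Lemma~\ref{colon with neighbor}-style colon computations. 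Together these give "$\subseteq$". For "$\supseteq$", I would exhibit each $\overline{Q}$ as a colon $I':m$ for a suitable monomial $m$: take the product of the variables outside $\overline{Q}$, times $y_i$ for each $i$ with $x_i\in Q$. This parallels the argument of Lemma~\ref{l:max}.

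\emph{Main obstacle.} I expect the hardest part to be this last bookkeeping: showing that every associated prime of the looped ideal is exactly of the form $\overline{Q}$ for a minimal cover $Q$ of $G$, and that none is missing. The tricky case is a loop at $y_i$ with $y_i$ forced into the cover, where the minimality condition has to be matched carefully.
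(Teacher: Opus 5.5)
Your reduction is the one the paper uses. You eliminate one variable from each binomial star to get $R/(I,f_1,\dots,f_q)\cong R'/I'$, with $I'$ the edge ideal of a graph with loops, and then compare $\Ass(R'/I')$ with $\{\overline{Q} : Q\in\Min(R/I)\}$ using the description in \cite[Lemma~4.2]{BrennanMorey1}. But that comparison is the entire content of the proof, and you leave it as ``bookkeeping''. The one explicit construction you give, the witness for ``$\supseteq$'', fails for an arbitrary representative $Q$. Your $m$ equals $\prod_{v\in V(H)\setminus Q}v$. One has $I':m=\overline{Q}$ exactly when no $y_j\in Q$ has $N_H(y_j)\subseteq Q$, i.e.\ no $y_j$ lies in $Q$ only to cover its leaf edge $x_jy_j$; such a $y_j$ has $y_j^2\in I':m$ but $y_j\notin I':m$. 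For the path with edges $x_1y_1$, $y_1z$, $zw$ and $f_1=x_1+y_1$, one has $I'=(y_1^2,y_1z,zw)=(y_1,w)\cap(y_1^2,z)$. For $Q=\{y_1,z\}$ your recipe gives $m=w$ and $I':w=(y_1^2,z)$, which is not prime. The repair is to choose the representative. If $y_j\in Q$ is such a vertex, then $(Q\setminus\{y_j\})\cup\{x_j\}$ is again a minimal vertex cover with the same $\overline{Q}$ and fewer such vertices. Here that gives $\{x_1,z\}$, $m=wy_1$ and $I':wy_1=(y_1,z)$. After finitely many swaps your witness works, giving a citation-free proof of ``$\supseteq$''. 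The paper instead keeps $Q$ arbitrary and sets $K=\{i: y_i\notin Q\}$ and $W=\overline{Q}\setminus\bigcup_{i\in K}N_H[y_i]$ (in your indexing). It then checks the Brennan--Morey conditions: $K$ is independent since $Q$ is a cover, and each $w\in W$ is needed either for an edge $wz$ with $z\notin\overline{Q}$ or, when its only private edge in $G$ is the leaf edge, for its own loop.

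For ``$\subseteq$'', your induction along $0\to R'/(I':y_i)\to R'/I'\to R'/(I',y_i)\to 0$ can be made to work, but not as written. First, $(I',y_i)$ still contains the loops $y_j^2$ with $j\neq i$, so its associated primes are not just minimal covers. For example, $I'=(y_1^2,y_2^2,y_1y_2,y_2u)$ gives $(I',y_1)=(y_1,y_2)\cap(y_1,y_2^2,u)$, which has the embedded prime $(y_1,y_2,u)$; this piece must also go through the induction hypothesis. Second, you must lift each resulting prime back to a minimal cover of $G$. A prime $(y_i)+\overline{Q''}$ of the first piece comes from $Q''\cup\{y_i\}$. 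A prime $(N_H[y_i])+\overline{Q''}$ of the colon comes from $Q''\cup N_H(y_i)\cup\{x_i\}$, where $Q''$ is a minimal cover of the appropriate smaller graph. In both cases minimality in $G$ has to be verified. The paper does this lift in one step: from $P_1=W\cup\bigcup_{i\in K}N[y_i]$ it forms $Q_1$ by replacing $y_i$ with $x_i$ for $i\in K$. It then checks that $Q_1$ is a minimal vertex cover of $G$ with $(Q_1,f_1,\dots,f_q)=P$. Finally, your paraphrase of the Brennan--Morey description misplaces the exemption. A looped vertex is always needed, for its own loop. What is special is that for an independent set $K$ of looped vertices the whole closed neighborhood $N[y_i]$ is included, so non-looped neighbors of $y_i$ may lie in the prime without a private edge. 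This is what $z$ does in the associated prime $(y,z)$ of $(y^2,yz)$.
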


\begin{proof} Without loss of generality, we assume that $y_i$ is a leaf for all $i \in [q].$
We proceed as in \Cref{c:binom-extn}, using the isomorphism shown in \cite{FHM}. More specifically, by \cite{FHM},  
$$R/(I,f_1,\ldots,f_q) \cong R'/I',$$
where $I'$ is the edge ideal of a graph $G'$ with loops formed from $G$ by replacing the edge corresponding to $x_iy_i \in I$ by a loop corresponding to $x_i^2\in I'$, and $R' \cong R/(y_1, \ldots, y_q)$. Note that given these isomorphisms, it follows that $P \in \Ass(R/(I,f_1,\ldots,f_q))$ if and only if $P= P_1 + (y_1, \ldots, y_q)$ for some $P_1 \in \Ass(R'/I')$.
By \cite[Lemma 4.2]{BrennanMorey1} (see also \cite[Corollary 4.14]{MV}), $P_1 \in \Ass(R'/I')$ if and only if $P_1=( W \cup (\cup_{i\in K} N_{G'}[x_i]))$ for some $K \subseteq [q]$ with $x_j, x_k$ not connected if $j,k \in K$ and $W$ is a minimal vertex cover of the edges of $G'$ not covered by $\cup_{i\in K} N_{G'}[x_i]$.

Assume $Q \in \Min(R/I)$ and let $P=(Q,f_1,\ldots, f_q)$. For each $i$, since $x_iy_i$ is a generator of $I$, either $x_i$ or $y_i$ is in $Q$. Since $f_i = x_i+y_i$, it follows that both $x_i, y_i \in P$ for all $i \in [q]$. Thus $P=P_1 + (y_1, \ldots, y_q)$, where $P_1 = Q \cup \{x_1, \ldots,x_q\} \setminus \{y_1,\dots, y_q\}$. Note that if $x_i \not\in Q$ for some $i \in [q]$, then $N_G(x_i) \in Q$ since $Q$ is a vertex cover of $G$. Thus $N_{G'}[x_i] \in P_1$ when $x_i \not\in Q$. If for some $i\ne j$, $x_i, x_j \not\in Q$ then since $Q$ is a vertex cover, $x_i$ and $x_j$ cannot be connected in $G$ or $G'$. Let $K \subseteq [q]$ be such that $$i\in K \text{ if and only if } x_i \not\in Q.$$ Set $W=P_1\setminus \cup_{i \in K} N_{G'}[x_i]$.
Then clearly, $P_1 = (W \cup (\cup_{i\in K} N_{G'}[x_i]))$. 

To complete the proof of this direction, we show that $W$ is a minimal vertex cover of the edges of $G'$ not covered by $\cup_{i\in K} N_{G'}[x_i]$. First we show that $W$ is a vertex cover of the edges not covered by $\cup_{i\in K} N_{G'}[x_i]$. Let $\{a,b\}$ be an edge of $G'$ such that $a,b \not\in (\cup_{i\in K} N_{G'}[x_i])$.  Then either $\{a,b\}$ is an edge of $G$, or $\{a,b\}$ is a loop of $G'$ and so $a=b=x_i$ for some $i\not\in K$. If $a=b=x_i$, then since $i\not\in K$, we have $a=x_i\in Q$. If $\{a,b\}$ is an edge of $G$, then
since $Q$ is a minimal vertex cover of $G$,  either $a$ or $b$ is in $Q$. By the definition of $G'$, neither $a$ nor $b$ is in $\{y_1, \ldots, y_q\}$. Hence, in either case, $a$ or $b$ is in $P_1$. Therefore, $a$ or $b$ is in $W$.

To see that $W$ is minimal, let $w \in W$. Since $W=P_1\setminus \cup_{i \in K} N_{G'}[x_i]$, we have $w \ne y_i$ for any $i$ and either $w \in Q$ or $w=x_i\not\in Q$ for some $i$. If $w=x_i\not\in Q$, then $i\in K$ and thus  $x_i\in (\cup_{i\in K} N_{G'}[x_i])$, a contradiction. Thus $w\in Q$. Since $Q$ is a minimal vertex cover, then there exists $z\in N_G(w)$ such that $z\not\in Q$. If $z=y_i$, then since $y_i$ is a leaf, $w=x_i$ and $w$ minimally covers the edge $x_i^2$ of $G'$. If $z \ne y_i$ for any $i\in [q]$, then $\{w,z\}$ is a non-loop edge of $G'$. Note that since $w\in W$, $z \ne x_i$ for $i\in K$, and more generally, $z \not\in (\cup_{i\in K} N_{G'}[x_i])$ since by definition, for $i\in K$, $N_G(x_i) \subseteq Q$ and $z \not\in Q$. Then $\{w,z\}$ is an edge of $G'$ not covered by $\cup_{i\in K} N_{G'}[x_i]$. We now claim that $z\not\in W$. Since $z \not\in Q$, if $z\in W$, then $z=x_j$ for some $j$. Then since $z=x_j \not\in Q$, we have $j\in K$ and $z\in \cup_{i\in K} N_{G'}[x_i]$, a contradiction. Thus $z \not\in W$ and $\{w,z\}$ is an edge of $G'$ minimally covered by $w$, as desired.

For the converse, assume $P \in \Ass(R/(I,f_1, \ldots, f_q))$. Then as before, $P=P_1 + (y_1,\ldots, y_q)$ where $P_1\in \Ass(R'/I')$. Note that since $f_i \in P$ and $y_i \in P$, then $x_i=f_i-y_i \in P$ for all $i\in [q]$. By \cite[Lemma 4.2]{BrennanMorey1}, $P_1=( W \cup (\cup_{i\in K} N_{G'}[x_i]))$ for some $K \subseteq [q]$ with $x_j, x_k$ not connected if $j,k \in K$ and $W$ a minimal vertex cover of the edges of $G'$ not covered by $\cup_{i\in K} N_{G'}[x_i]$. Define $Q_1=W \cup (\cup_{i\in K} N_G(x_i))$. Note that $N_{G}(x_i)=N_{G'}[x_i]\setminus\{x_i\}\cup \{y_i\}$. Then by definition, $Q_1$ is a vertex cover of $G$ and if $i\in K$, then $y_i \in Q_1$. Moreover, if $i \not\in K$, then $x_i \in Q_1$ since either $x_i \in N_G(x_j)$ for some $j \in K$, or $x_i \in W$ is needed to cover the loop $x_i^2$. Thus for each $i$, either $x_i$ or $y_i$ is in $Q_1$. To see that
$(Q_1, f_1,\ldots, f_q) \subseteq P$, first note that $x_i, y_i \in (Q_1, f_1,\ldots, f_q)$ for all $i \in [q]$, and $(Q_1, x_1,\ldots, x_q, y_1,\ldots, y_q)$ is a (non-minimal) generating set. Let $b \in(Q_1, f_1,\ldots, f_q)$. If $b=y_i$ or $b=x_i$, then $b\in P$. If $b \in Q_1$ but $b\ne y_i$, then $b\in P_1 \subseteq P$. For the reverse inclusion, let $a \in P$. If $a=x_i$ or $a=y_i$, then $a\in (Q_1, f_1,\ldots, f_q)$. If $a \in P_1$ but $a \ne x_i$, then $a \in ( W \cup (\cup_{i\in K} N_{G'}(x_i))) \subseteq Q_1$.

Finally, we claim that $Q_1 \in \Min(R/I)$. As above, $Q_1$ is a vertex cover. Let $w \in Q_1$. If $w \in W$, then by the definition of $W$, there is an edge $wz$ with $z \not\in W$ and $z \not\in (\cup_{i\in K} N_{G'}[x_i]))$. Thus $z \not\in Q_1$. If $w \not\in W$, then $w \in (\cup_{i\in K} N_{G'}(x_i))$ and the edge $wx_i$ for some $i\in K$ is minimally covered by $w$ since $x_i \not\in Q_1$.
\end{proof}

When allowing for more general stars that are not necessarily binomial, some additional hypotheses are needed to prove that ideals of the form $(Q,f_1,\ldots, f_q)$ with $Q\in \Min(R/I)$ are associated to $(I,f_1, \ldots, f_q)$. These conditions are introduced below, and will be used in \Cref{thm: minimal primes extend} to provide additional supporting evidence for \Cref{conj.ass}.

\begin{setting}\label{set u_i}
Let $I=I(G)$ be the edge ideal of a graph $G$ in a polynomial ring $R$ over a field $\KK$ and let $f_1, \ldots, f_q$  be pairwise disjoint stars on $I$. Fix $Q\in \Min(R/I)$ and let
$\bar f_i$ denote the image of $f_i$ in $R/Q$.
Set $D=V(G)\setminus Q$. 

After renumbering, assume that there exists $r\ge 0$ such that 
$\bar f_i$ is a variable for $i\in[r]$,
and that $\bar f_i$ is a nonzero non-variable linear form for $r < i \le q$.

Further assume that for each $i\in[r]$ there exists a vertex $u_i$ such that 
$ u_i \in \text{Supp}(f_i)\cap Q
$ with $N_G(u_i)\cap D = \{\bar f_i\}$
and that the vertices $u_1,\dots,u_r$ are pairwise nonadjacent.
\end{setting}

We are now able to show that one direction of \Cref{conj.ass} is true when the assumptions of \Cref{set u_i} hold. 

\begin{theorem}\label{thm: minimal primes extend}
Under the hypotheses of \Cref{set u_i}, 
\[
(Q,f_1,\ldots,f_q)\in \Ass(R/(I,f_1,\ldots,f_q)).
\]
\end{theorem}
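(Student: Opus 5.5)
The plan is to realize $P=(Q,f_1,\ldots,f_q)$ as a minimal prime of a colon ideal $J:g$, where $J=(I,f_1,\ldots,f_q)$ and $g$ is a suitable monomial. This suffices: $R/(J:g)\cong gR/J\subseteq R/J$ gives $\Ass(R/(J:g))\subseteq \Ass(R/J)$, and minimal primes are always associated. By \Cref{lem: minimal prime format}(a), $P$ is prime. So it is enough to find $g$ with
\[
J\subseteq J:g\subseteq P
\]
and such that $P$ is minimal over $J:g$.

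First I record the shape of $P$. For $i\in[r]$, $\bar f_i$ is a variable $v_i\in D\cap\supp(f_i)$, and all other variables of $\supp(f_i)$ lie in $Q$; hence $\supp(f_i)\subseteq P$. For $i>r$, at least two variables of $\supp(f_i)$ lie in $D$, and $f_i$ contributes only the single linear form $\widehat f_i$ beyond $Q$. As a convenient first step I would localize at $P$. Associatedness is preserved under localization, so I want $PR_P\in\Ass(R_P/JR_P)$. In $R_P$ every variable $w\in D\setminus\{v_1,\ldots,v_r\}$ is a unit. Since $D$ is independent and $Q$ covers, $IR_P$ then contains $N_G(w)$ for every such $w$. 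After this step, the only edges of $G$ surviving in $IR_P$ modulo those variables are edges between $\{v_1,\ldots,v_r\}$ and vertices of $Q$ all of whose $D$-neighbours lie in $\{v_1,\ldots,v_r\}$, together with edges inside $Q$. This includes the vertices $u_i$, whose only $D$-neighbour is $v_i$.

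Next I choose $g=u_1\cdots u_r$, possibly multiplied by a monomial in the unit variables of $D$ (harmless after localization). The idea is modelled on \Cref{l:max}. Since $u_iv_i\in I$, we get $v_i\in I:u_i$. Among the vertices of $D$, the hypothesis $N_G(u_i)\cap D=\{v_i\}$ says $u_i$ kills only $v_i$, and the $u_i$ are pairwise nonadjacent, so $g\notin I$ and $g$ does not force extra vertices of $D$ into the colon. I would compute $J:g$ using \Cref{colon with neighbor} wherever possible, i.e.\ for those $f_i$ whose support is coprime to $g$. For the $f_i$ with $i\le r$, where $u_i\in\supp(f_i)$, I would instead substitute $u_i\equiv -(f_i-u_i)$ modulo $J$. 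Together with the fact that the other variables of $\supp(f_i)$ lie in $Q$, this should show that $J:g$ contains $Q$ locally, and contains $v_i$ and $f_i$. It would then follow that $PR_P$ is minimal over $(J:g)R_P$, since $J:g$ contains $Q$ together with all $f_i$.

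The main obstacle is the inclusion $J:g\subseteq P$, i.e.\ that $hg\in J$ forces $h\in P$. Because $J$ is not monomial and $g$ meets $\supp(f_i)$ for $i\le r$, \Cref{colon with neighbor} does not apply directly. My first attempt would be to pass to $R/(I,f_1,\ldots,f_q)$ modulo the variables of $Q$ not adjacent to any $u_i$, reducing to the ring $R/(Q',f_1,\ldots,f_q)$. The key is to check that $g$ stays a nonzerodivisor on the prime $P$ modulo $J$ localized at $P$. Concretely, I would show $g\notin P$ fails only up to the $v_i$-relations, and then argue degree-by-degree, using that $P$ is generated by variables and linear forms in disjoint variables. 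If this direct route stalls, the fallback is to run the short exact sequence of \eqref{ses z_1} with $z=u_i$, one $i$ at a time, tracking associated primes exactly as in the proof of \Cref{thm: stars reg seq}.
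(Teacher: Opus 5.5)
\textbf{Gap: the witness $g=u_1\cdots u_r$ is the wrong one.} Your reduction is fine: localize at $P$ and find $g$ with $J:g\subseteq P$ and $P$ minimal over $J:g$. The failure is the claim that $J:g$ contains $Q$ locally. This breaks as soon as a star has more than one variable in $Q$.

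Take $G$ with edges $tu,ty$, the star $f_1=t+u+y$ centered at $t$, $Q=(u,y)$, $D=\{t\}$, $\bar f_1=t$ and $u_1=u$. All hypotheses of \Cref{set u_i} hold. Here $R/J\cong \KK[u,y]/((u+y)(u,y))$, so $J:u=(t,u+y)$. This is a prime strictly inside $P=(t,u,y)$. Hence $P$ is not minimal over $J:g$, and $u\notin J:u$ even after localizing.

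Your substitution $u_i\equiv -(f_i-u_i)$ does not rescue this. It only gives $u_i^2\equiv -u_i\sum x$, summed over the other $Q$-variables $x$ of $\supp(f_i)$, and those products need not lie in $J$.

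You also leave the inclusion $J:g\subseteq P$ unproved. It is equivalent to $g\neq 0$ in $S=(R/J)_P$, and this nonvanishing is the real content of the theorem. Your degree-by-degree idea and the short exact sequence fallback are not yet an argument.

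\textbf{Fix: take the witness on the $D$-side.} The paper uses $M=t_1\cdots t_r$, where $t_i=\bar f_i$. Then $PS\subseteq\Ann_S(M)$ is immediate. Every $x\in Q$ either has a neighbour in $D\setminus T$, which is a unit in $S$, or is adjacent to some $t_i$. Also each $f_i=0$ in $S$.

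The hypotheses on the $u_i$ are used only to show $M\neq 0$ in $S$. Let $\FF=\operatorname{Frac}(R/P)$ and $B=\FF[\varepsilon_1,\ldots,\varepsilon_r]/(\varepsilon_1^2,\ldots,\varepsilon_r^2)$. Define $\psi:R\to B$ by $t_i\mapsto\varepsilon_i$ and $u_i\mapsto-\varepsilon_i$. Send the remaining variables of $Q$ to $0$, and those of $D\setminus T$ to their images in $\FF$.
\begin{itemize}
\item $\psi$ kills each $f_i$.
\item $\psi$ kills $I$; this is exactly where $N_G(u_i)\cap D=\{t_i\}$ and the pairwise nonadjacency of the $u_i$ are needed.
\item $\psi$ sends every element outside $P$ to a unit of $B$, since $(\varepsilon_1,\ldots,\varepsilon_r)$ is nilpotent with residue field $\FF$. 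So $\psi$ factors through $S$.
\item $\psi(M)=\varepsilon_1\cdots\varepsilon_r\neq 0$.
\end{itemize}
Hence $\Ann_S(M)$ is a proper ideal containing the maximal ideal $PS$, so it equals $PS$, and $P\in\Ass(R/J)$.

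The same map gives $\psi(u_1\cdots u_r)=\pm\varepsilon_1\cdots\varepsilon_r\neq0$. So your inclusion $J:g\subseteq P$ is true, but the minimality half of your plan is false for this $g$.
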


\begin{proof}
By abuse of notation, let $R=\KK[x_1,\ldots,x_n]$, where $V(G)=\{x_1,\ldots,x_n\}$ is the set of vertices of $G$. Let $Q\in \Min(R/I)$ and recall that $Q$ is a minimal vertex cover of $G$. Set $J=(I,f_1,\ldots,f_q)$, $P=(Q,f_1,\ldots,f_q)$, and $D=V(G)\setminus Q$. By \Cref{lem: minimal prime format}, $P$ is a prime ideal.

For each $i\in [q]$, let $\bar f_i$ denote the image of $f_i$ in $R/Q$. As in \Cref{set u_i}, after renumbering, there exists $r\ge 0$ such that $\bar f_i=t_i$ is a variable for $1\le i\le r$, and $\bar f_i$ is a nonzero non-variable linear form for $r<i\le q$. Notice that $P=(Q,f_1,\ldots,f_q)=(Q,\bar f_1,\ldots,\bar f_q)$.

Set $S=(R/J)_P$. Since $P\supseteq J$ and $P$ is prime, we have $S\neq 0$.

We first handle the case $r=0$. In this case, every $\bar f_i$ is a nonzero non-variable linear form. Since the stars have pairwise disjoint supports, no variable $x\in D$ lies in the ideal $(\bar f_1,\ldots,\bar f_q)$ of $R/Q$. Hence $x\notin P$ for every $x\in D$, and so each such $x$ is a unit in $S$. Now let $y$ be a variable in $Q$. Since $Q$ is a minimal vertex cover of $G$, there exists $x\in D$ such that $xy\in I$. Thus $xy=0$ in $S$, and since $x$ is a unit in $S$, we get $y=0$ in $S$. Therefore $QS=0$. Since each $f_i=0$ in $S$, we have $PS=0$. As $S\neq 0$, it follows that $PS\in \Ass_S(S)$, and hence $P\in \Ass_R(R/J)$.

Now suppose $r\ge 1$ and let $T=\{t_1,\ldots,t_r\}$. As above, every variable $x\in D\setminus T$ is a unit in $S$. Set $M=\prod_{t_i\in T} t_i$. We first show that $PS\subseteq \Ann_S(M)$. Let $x\in Q$. By the minimality of $Q$, there exists a neighbor $z$ of $x$ with $z\in D$. If $z\in D\setminus T$, then $z$ is a unit in $S$, and since $xz\in I\subseteq J$, we get $x=0$ in $S$. Suppose instead that all neighbors of $x$ in $D$ lie in $T$. Then $x$ has a neighbor $t_i\in T$, so $xt_i\in I$ and hence $xt_i=0$ in $S$. Thus $xM=0$. Therefore $QS\subseteq \Ann_S(M)$. Since $f_i=0$ in $S$ for all $i$, we have $PS\subseteq \Ann_S(M)$. It remains to prove that $M\neq 0$ in $S$.

Let $\FF=\operatorname{Frac}(R/P)$ and let $B=\FF[\varepsilon_1,\ldots,\varepsilon_r]/(\varepsilon_1^2,\ldots,\varepsilon_r^2)$. Define a homomorphism $\psi:R\to B$ by setting $\psi(t_i)=\varepsilon_i$, $\psi(u_i)=-\varepsilon_i$, $\psi(x)=0$ for $x\in Q\setminus\{u_1,\ldots,u_r\}$, and $\psi(y)=\bar y\in \FF$ for $y\in D\setminus T$.

We claim that $\psi(f_i)=0$ for all $i$. For $1\le i\le r$, we may write
\[
f_i=t_i+u_i+\sum_{x\in \supp(f_i)\cap Q\setminus\{u_i\}}x,
\]
and hence $\psi(f_i)=\varepsilon_i-\varepsilon_i=0$. For $r<i\le q$, the support of $f_i$ is disjoint from the supports of $f_1,\ldots,f_r$, so no variable $t_j$ or $u_j$ with $j\le r$ occurs in $f_i$. Thus $\psi(f_i)$ is the image of $f_i$ in $\FF=\operatorname{Frac}(R/P)$, which is zero because $f_i\in P$.

Next we claim that $\psi(I)=0$. Let $ab\in I$ be an edge of $G$. Since $Q$ is a vertex cover, we may assume that $a\in Q$. If $a\neq u_i$ for all $i$, then $\psi(a)=0$, and so $\psi(ab)=0$. Suppose $a=u_i$ for some $i\le r$. If $b\in D$, then the hypothesis $N_G(u_i)\cap D=\{t_i\}$ gives $b=t_i$, and hence $\psi(ab)=-\varepsilon_i^2=0$. If $b\in Q$, then the pairwise nonadjacency of $u_1,\ldots,u_r$ implies that $b\neq u_j$ for all $j$. Hence $\psi(b)=0$, and again $\psi(ab)=0$. Thus $\psi(I)=0$, and so $\psi$ induces a homomorphism $\overline{\psi}:R/J\to B$.

We now show that $\overline{\psi}$ localizes at $P$. Let $\pi:B\to B/(\varepsilon_1,\ldots,\varepsilon_r)\cong \FF$ be the quotient map. Observe that if $g\notin P$, then $\pi(\psi(g))\neq 0$. Since $\FF$ is a field and $(\varepsilon_1,\ldots,\varepsilon_r)$ is a nilpotent ideal of $B$, it follows that $\psi(g)$ is a unit in $B$. Therefore $\overline{\psi}$ induces a homomorphism $\widetilde{\psi}:S=(R/J)_P\to B$.

Finally, $\widetilde{\psi}(M)=\psi(t_1\cdots t_r)=\varepsilon_1\cdots\varepsilon_r$. This element is nonzero in $B$, since the only relations among the $\varepsilon_i$ are $\varepsilon_i^2=0$. Thus $M\neq 0$ in $S$. Since $PS\subseteq\Ann_S(M)$ and $M\neq 0$, the annihilator $\Ann_S(M)$ is a proper ideal containing the maximal ideal $PS$ of the local ring $S$. Hence $\Ann_S(M)=PS$. Therefore $PS\in\Ass_S(S)$, and it follows that $P\in\Ass_R(R/J)$.
\end{proof}

%%%%%%%%%%%%%%%%%%%%%%%%%%%%%%%%

\bibliography{RegularBib}
\bibliographystyle{plain}

\end{document}